\documentclass[12pt]{amsproc}
\newtheorem{theorem}{\sc Theorem}[section]
\newtheorem{lemma}[theorem]{\sc Lemma}

\begin{document}
\title{Almost Engel linear groups }
\author{Pavel Shumyatsky }
\address{ Department of Mathematics, University of Brasilia,
Brasilia-DF, 70910-900 Brazil}
\email{pavel@unb.br}
\thanks{This research was supported by FAPDF and CNPq-Brazil}
\keywords{Linear groups; Engel condition; locally nilpotent groups}
\subjclass[2010]{20E99,20F45, 20H20}
\begin{abstract}
A group $G$ is almost Engel if for every $g\in G$ there is a finite set ${\mathcal E}(g)$ such that for every $x\in G$ all sufficiently long commutators $[x,{}_n\, g]$ belong to ${\mathcal E}(g)$, that is, for every $x\in G$ there is a positive integer $n(x,g)$ such that $[x,{}_n\, g]\in {\mathcal E}(g)$ whenever $n(x,g)\leq n$. A group $G$ is almost nil if it is almost Engel and for every $g\in G$ there is a positive integer $n$ such that $[x,{}_n\, g]\in{\mathcal E}(g)$ for every $x\in G$.

We prove that if a linear group $G$ is almost Engel, then $G$ is finite-by-hypercentral. If $G$ is almost nil, then $G$ is finite-by-nilpotent.
\end{abstract}

\maketitle

\section{Introduction} 

By a linear group we understand here a subgroup of $GL(m,F)$ for some field $F$ and a positive integer $m$. An element $g$ of a group $G$ is called a (left) Engel element if for any $x\in G$ there exists $n=n(x,g)\geq 1$ such that $[x,{}_n\, g]=1$. As usual, the commutator $[x,{}_n\, g]$ is defined recursively by the rule $$[x,{}_n\, g]=[[x,{}_{n-1}\, g],g]$$ assuming $[x,{}_0\, g]=x$. If $n$ can be chosen independently of $x$, then $g$ is a {\em (left) $n$-Engel element}. A group $G$ is called Engel if all elements of $G$ are Engel. It is called $n$-Engel if all its elements are $n$-Engel. A group is said to be locally nilpotent if every finite subset generates a nilpotent subgroup. Clearly, any locally nilpotent group is an Engel group. It is a long-standing problem whether any $n$-Engel group is locally nilpotent. Engel linear groups are known to be locally nilpotent (cf. \cite{gara, grue}).

We say that a group $G$ is almost Engel if for every $g\in G$ there is a finite set ${\mathcal E}(g)$ such that for every $x\in G$ all sufficiently long commutators $[x,{}_n\, g]$ belong to ${\mathcal E}(g)$, that is, for every $x\in G$ there is a positive integer $n(x,g)$ such that $[x,{}_n\, g]\in {\mathcal E}(g)$ whenever $n(x,g)\leq n$. (Thus, Engel groups are precisely the almost Engel groups for which we can choose ${\mathcal E}(g)=\{ 1\}$ for all $g\in G$.) We say that a group $G$ is nil if for every $g\in G$ there is a positive integer $n$ depending on $g$ such that $g$ is $n$-Engel. The group $G$ will be called almost nil if it is almost Engel and for every $g\in G$ there is a positive integer $n$ depending on $g$ such that $[x,{}_n\, g]\in{\mathcal E}(g)$ for every $x\in G$.

Almost Engel groups were introduced in \cite{khushu} where it was proved that an almost Engel compact group is necessarily finite-by-(locally nilpotent). The purpose of the present article is to prove the following related result.

\begin{theorem}\label{main} Let $G$ be a linear group. 
\begin{itemize}
\item[1.] If $G$ is almost Engel, then $G$ is finite-by-hypercentral.
\item[2.] If $G$ is almost nil, then $G$ is finite-by-nilpotent.
\end{itemize}
\end{theorem}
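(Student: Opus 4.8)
The plan is to reduce, via standard structure theory of linear groups, to cases where either the Engel-type condition forces local nilpotence directly, or where a finite normal subgroup can be split off. First I would invoke the fact that a linear group $G$ over a field $F$ has a normal subgroup $N$ of finite index which is triangularizable over the algebraic closure up to a finite extension — more precisely, by Mal'cev's theorem $G$ has a normal subgroup of finite index whose image in every irreducible constituent is either finite or has a connected (hence triangularizable, by Lie--Kolchin) derived-group-type structure. The cleaner route: pass to the Zariski closure $\bar G$ in $GL(m,F)$, let $\bar G^{\circ}$ be the connected component of the identity, and set $G_1 = G \cap \bar G^{\circ}$, a normal subgroup of finite index in $G$. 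Almost Engel and almost nil are inherited by subgroups, so $G_1$ satisfies the same hypotheses.

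Next I would analyze $G_1$ using the unipotent radical. Write $U$ for the unipotent radical of $\bar G^{\circ}$; then $U$ is a connected nilpotent normal subgroup and $\bar G^{\circ}/U$ is reductive. The key claim is that the almost Engel condition on $G_1$ forces the image of $G_1$ in $\bar G^{\circ}/U$ to be abelian-by-finite, in fact that any non-trivial semisimple part is incompatible with almost-Engelness unless it is finite. Concretely, if $g \in G_1$ has a non-central semisimple component acting with at least two distinct eigenvalues $\lambda \neq \mu$ on some subspace, then choosing $x$ to be an appropriate root-group or transvection-type element, the commutators $[x,{}_n\,g]$ take infinitely many distinct values (they scale by powers of $\lambda\mu^{-1}$), contradicting finiteness of $\mathcal{E}(g)$. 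This is the heart of the argument and the step I expect to be the main obstacle: making the eigenvalue/infinitely-many-values argument work uniformly, handling the case where $\lambda\mu^{-1}$ is a root of unity (where one must instead produce infinitely many values by combining with the unipotent direction or by varying $x$ across a suitable family), and organizing it so it applies to the whole group rather than one element at a time.

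Granting that, the image $\bar G_2$ of $G_1$ modulo $U$ is (central torus)-by-finite, so after passing to a further finite-index normal subgroup $G_2$ we may assume $G_2$ acts on each composition factor of the underlying module by scalars, i.e. $G_2$ is triangularizable and its "diagonal part" is central. Then $[G_2, G_2]$ lies in the unipotent part, so $G_2$ is nilpotent-by-abelian with the abelian quotient acting trivially; a short argument shows $G_2$ itself is then hypercentral (indeed nilpotent when we are in the almost nil case, because boundedness of $n$ and the triangular structure over a field of bounded dimension $m$ bound the nilpotency class via the upper unitriangular group $UT(m,F)$). At this point $G_2$ is hypercentral (resp. nilpotent) and $|G:G_2|$ is finite but $G_2$ need not be normal with the right quotient structure — so the final step is to upgrade "finite-index hypercentral subgroup" to "finite-by-hypercentral." For this I would use the almost Engel hypothesis once more: for each $g\in G$ the set $\mathcal{E}(g)$ is finite, and one shows the subgroup generated by all the finite sets $\mathcal{E}(g)$ together with the finitely many coset representatives of $G_2$ is itself finite (using linearity: a finitely generated linear torsion-by-? subgroup argument, or Jordan's theorem on finite linear groups) and normal, after which $G$ modulo this finite normal subgroup is hypercentral (resp. nilpotent).

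I expect essentially no difficulty in the passage to finite-index subgroups and in the final splitting-off of a finite normal subgroup (these are routine once linearity and the eigenvalue analysis are in hand); the real work, and the place where the distinction between almost Engel/finite-by-hypercentral and almost nil/finite-by-nilpotent is genuinely used, is in the eigenvalue argument ruling out infinite semisimple action, and in the bookkeeping that shows the union of the $\mathcal{E}(g)$'s generates only a finite group.
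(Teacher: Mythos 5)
Your outline diverges substantially from the paper's argument, and the step you yourself flag as the main obstacle is a genuine gap rather than a deferred routine verification. The central claim --- that a non-central semisimple part forces $\mathcal E(g)$ to be infinite via eigenvalue scaling --- is precisely where the theorem is hard, and the problematic case is not peripheral: in positive characteristic every eigenvalue is a root of unity, a simple algebraic group has trivial unipotent radical (so there is no ``unipotent direction'' to combine with), and the rational points give infinite locally finite simple linear groups such as $PSL_2$ over the algebraic closure of a prime field. There, for a fixed $g$ and a fixed root element $x$, the iterated commutators $[x,{}_n\,g]$ genuinely cycle through a finite set, so one must vary $x$ over an infinite family and show the resulting values cannot all be absorbed into a single finite sink; carrying this out uniformly for an arbitrary almost Engel linear group is the substance of the proof, not bookkeeping. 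The paper avoids the eigenvalue analysis entirely: it uses the Tits alternative to see that every finitely generated subgroup is virtually soluble, hence finite-by-nilpotent by Theorem \ref{ccc}; reduces to an infinite periodic (locally finite) group with trivial soluble radical; and then combines the Hall--Kulatilaka theorem with the minimal condition on centralizers to produce a nontrivial element $a$ with $\mathcal E(a)=\{1\}$, i.e.\ a nontrivial Engel element, a contradiction. Something of comparable strength is needed in place of your eigenvalue step, and as written your argument does not close.

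A second, smaller problem is your final step. The subgroup generated by all the sets $\mathcal E(g)$, $g\in G$, together with coset representatives of $G_2$ is not finite in general (the coset representatives may have infinite order, and $g$ ranges over an infinite set), and even the finiteness of $\langle\bigcup_g\mathcal E(g)\rangle$ is essentially equivalent to the conclusion being proved, so it cannot be assumed. The paper instead constructs the finite normal subgroup incrementally (Lemmas \ref{zero}, \ref{cyclic} and \ref{nilpo}), using Baer's theorem, Theorem \ref{sysak} on groups with hypercenter of finite index, and Dicman's lemma; machinery of this kind is what upgrades ``virtually hypercentral'' to ``finite-by-hypercentral,'' and your proposal does not supply a substitute for it.
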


Recall that the union of all terms of the (transfinite) upper central series of $G$ is called the hypercenter. The group $G$ is hypercentral if it coincides with its hypercenter. The hypercentral groups are known to be locally nilpotent (see \cite[P. 365]{rob}). By well-known results obtained in \cite{gara,grue}, if under the hypotheses of Theorem \ref{main} the group $G$ is Engel or nil, then $G$ is hypercentral or nilpotent, respectively.

\section{Preliminaries} 

Let $G$ be a group and $g\in G$ an almost Engel element so that there is a finite set ${\mathcal E}(g)$ such that for every $x\in G$ there is a positive integer $n(x,g)$ with the property that $[x,{}_n\, g]$ belongs to ${\mathcal E}(g)$ whenever $n(x,g)\leq n$. If ${\mathcal E}'(g)$ is another finite set with the same property for possibly different numbers $n'(x,g)$, then ${\mathcal E}(g)\cap {\mathcal E}'(g)$ also satisfies the same condition with the numbers $n''(x,g)=\max\{n(x,g),n'(x,g)\}$. Hence there is a \emph{minimal} set with the above property. The minimal set will again be denoted by ${\mathcal E}(g)$ and, following \cite{khushu}, called the \emph{Engel sink for $g$}, or simply \emph{$g$-sink} for short. From now on we will always use the notation ${\mathcal E}(g)$ to denote the (minimal) Engel sinks. In particular, it follows that for each $x\in {\mathcal E}(g)$ there exists $y\in {\mathcal E}(g)$ such that $x=[y,g]$. More generally, given a subset $K\subseteq G$ and an almost Engel element $g\in G$, we write $\mathcal E(g,K)$ to denote the minimal subset of $G$ with the property that for every $x\in K$ there is a positive integer $n(x,g)$ such that $[x,{}_n\, g]$ belongs to ${\mathcal E}(g,K)$ whenever $n(x,g)\leq n$. Throughout the article we use the symbols $\langle X\rangle$ and $\langle X^G\rangle$ to denote the subgroup generated by a set $X$ and the minimal normal subgroup of $G$ containing $X$, respectively. 

A group is said to virtually have certain property if it contains a subgroup of finite index with that property. The following lemma can be found in \cite[Ch. 12, Lemma 1.2]{pasman} or in \cite[Lemma 21.1.4]{km}.

\begin{lemma}\label{passma} A virtually abelian group contains a characteristic abelian subgroup of finite index.

\end{lemma}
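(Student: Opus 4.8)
The plan is to first reduce to a normal abelian subgroup and then promote it to a genuinely characteristic one by passing to the FC-center. So I would start with an abelian subgroup $A$ of finite index in $G$ and form its normal core $N=\bigcap_{g\in G}A^{g}$. Since $A$ has only finitely many distinct conjugates (their number is at most $[G:A]$), $N$ is an intersection of finitely many subgroups of finite index, hence of finite index; it is normal by construction and abelian as a subgroup of $A$. This disposes of the easy half: $G$ has a normal abelian subgroup of finite index. The genuine difficulty is that $N$ need not be invariant under all automorphisms of $G$, so I cannot simply take $N$ itself.

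To repair this I would work with the FC-center $F=\{g\in G:[G:C_{G}(g)]<\infty\}$, the set of elements with finitely many conjugates. This is a subgroup, because $C_{G}(xy)\supseteq C_{G}(x)\cap C_{G}(y)$ shows that a product of two elements with finite-index centralizers again has a finite-index centralizer, and it is manifestly characteristic, since having a finite conjugacy class is preserved by every automorphism. Moreover $N\subseteq F$: each $n\in N$ is centralized by all of $N$, so $C_{G}(n)\supseteq N$ has finite index and $n$ has finitely many conjugates. Consequently $[G:F]\le[G:N]<\infty$, so $F$ is a characteristic subgroup of finite index, and $[F:N]<\infty$ as well.

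Now I would take the candidate subgroup to be $Z(F)$. As the center of the characteristic subgroup $F$ it is itself characteristic in $G$ and it is abelian, so the only point left to verify is that it has finite index, and this is where the finiteness of conjugacy classes inside $F$ is used. Choose coset representatives $f_{1},\dots,f_{k}$ of $N$ in $F$, so that $F=\langle N,f_{1},\dots,f_{k}\rangle$. An element $z\in N$ lies in $Z(F)$ exactly when it commutes with this generating set, and since $z$ already commutes with the abelian group $N$ this reduces to $Z(F)\cap N=\bigcap_{i=1}^{k}C_{N}(f_{i})$. Each $f_{i}$ belongs to $F$ and so has a finite conjugacy class, whence $C_{G}(f_{i})$, and therefore $C_{N}(f_{i})=N\cap C_{G}(f_{i})$, has finite index in $N$. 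A finite intersection of finite-index subgroups has finite index, so $Z(F)\cap N$ has finite index in $N$, and thus $[G:Z(F)]\le[G:Z(F)\cap N]=[G:N]\,[N:Z(F)\cap N]<\infty$. Hence $Z(F)$ is the required characteristic abelian subgroup of finite index.

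The main obstacle, as the argument makes explicit, is not producing an abelian normal subgroup of finite index — the core does that at once — but arranging invariance under all automorphisms, and the device that overcomes it is the FC-center: it is automatically characteristic, it inherits finite index from $N$, and, crucially, the finiteness of its conjugacy classes is exactly what keeps the center $Z(F)$ of finite index after one intersects the centralizers of the finitely many coset representatives. The only points I would still check carefully are the two routine facts that $F$ is a subgroup and that passing to the center commutes with automorphisms, both of which are standard.
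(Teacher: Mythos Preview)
Your proof is correct. The paper does not give its own argument for this lemma; it simply cites Passman \cite[Ch.~12, Lemma 1.2]{pasman} and Kargapolov--Merzlyakov \cite[Lemma 21.1.4]{km}. Your argument via the FC-center $F=\Delta(G)$ and its center $Z(F)$ is in fact essentially the standard proof found in those references, so there is nothing to compare: you have reconstructed the cited argument.
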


As usual, we write $Z_i(G)$ for the $i$th term of the upper central series of $G$ and $\gamma_i(G)$ for the $i$th term of the lower central series. Well-known Schur's theorem says that if $G$ is central-by-finite, then the commutator subgroup $G'$ is finite (see \cite[10.1.4] {rob}). Baer proved that if, for a positive integer $k$, the quotient $G/Z_k(G)$ is finite, then so is $\gamma_{k+1}(G)$ (see \cite[14.5.1] {rob}). Recently, the following related result was obtained in \cite{degiova} (see also \cite{kurda}).

\begin{theorem}\label{sysak} Let $G$ be a group and let $H$ be the hypercenter of $G$. If $G/H$ is finite, then $G$ has a finite normal subgroup $N$ such that $G/N$ is hypercentral.
\end{theorem}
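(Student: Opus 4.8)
The plan is to extend Baer's theorem (a finite upper central quotient forces a finite lower central term) from a single finite level to the transfinite setting by means of a finitely generated reduction. First I would record two structural facts about the hypercenter $H=Z_\alpha(G)$. Since $Z_1(G)\le Z(H)$ and, by transfinite induction, $Z_i(G)\le Z_i(H)$ for every ordinal $i$, the subgroup $H$ is itself hypercentral and hence locally nilpotent. Moreover, the same transfinite induction gives $Z_i(G)\cap S\le Z_i(S)$ for any subgroup $S\le G$, so that $H\cap S$ is always contained in the hypercenter of $S$.

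Next I would pass to an arbitrary finitely generated subgroup $G_0\le G$ and put $H_0=G_0\cap H$. Then $H_0$ lies in the hypercenter of $G_0$, and $G_0/H_0\cong G_0H/H$ embeds in $G/H$, so $|G_0:H_0|\le t$, where $t=|G/H|$. Being of finite index in the finitely generated group $G_0$, the subgroup $H_0$ is finitely generated; being a subgroup of the hypercentral group $H$ it is hypercentral, and a finitely generated hypercentral group is nilpotent. Thus $G_0$ is nilpotent-by-finite, hence satisfies the maximal condition on subgroups, so its ascending upper central series stabilizes after finitely many steps; the hypercenter of $G_0$ therefore equals $Z_{n_0}(G_0)$ for some integer $n_0$, and $G_0/Z_{n_0}(G_0)$ is finite of order at most $t$. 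Baer's theorem now applies to $G_0$ and yields that $\gamma_{n_0+1}(G_0)$ is finite with $G_0/\gamma_{n_0+1}(G_0)$ nilpotent; that is, every finitely generated subgroup of $G$ is finite-by-nilpotent.

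Finally I would assemble the local finite parts into one finite normal subgroup. Choosing $G_0$ large enough that $G_0H=G$, the image of $\gamma_{n_0+1}(G_0)$ in $G/H$ is $\gamma_{n_0+1}(G/H)$, which for large $n_0$ equals the nilpotent residual of the finite group $G/H$; hence the subgroup $N=\langle \gamma_{n_0+1}(G_0): G_0\le G \text{ finitely generated}\rangle$ has the feature that $G/HN$ is nilpotent. I would then check two things. First, that $N$ is finite: each of its generators has finite order and, I would argue, a finite conjugacy class in $G$, so that Dietzmann's lemma forces $N$ to be finite. Second, that $G/N$ is hypercentral, using the characterization that a group is hypercentral precisely when every nontrivial quotient has nontrivial center. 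For a proper normal subgroup $M\ge N$ of $G$, if $H\not\le M$ then the nontrivial image of $H$ lies in the hypercenter of $G/M$ and forces $Z(G/M)\ne 1$; while if $H\le M$ then $G/M$ is a nontrivial quotient of the nilpotent group $G/HN$ and so again has nontrivial center.

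The main obstacle is the finiteness of $N$ in this last step. The heights $n_0$ are not bounded: across different finitely generated subgroups the nilpotency class of $H_0$ may grow without limit, reflecting the possibly transfinite height of $H$, so any estimate of $|\gamma_{n_0+1}(G_0)|$ that depends on $n_0$ is useless. The crux is to exploit the interaction between the fixed finite quotient $G/H$ of order $t$ and the hypercentral subgroup $H$ to show that, although $n_0\to\infty$, all the Baer parts $\gamma_{n_0+1}(G_0)$ lie inside a single finite normal subgroup whose order is controlled by $t$ alone; equivalently, to obtain a uniform bound on these parts independent of the height and, with it, the finiteness of the relevant $G$-conjugacy classes. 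Establishing this uniform control is where the real work lies, the preceding reduction and the application of Baer's theorem being comparatively routine.
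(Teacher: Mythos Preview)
The paper does not prove this theorem at all: it is quoted from the literature as a result of De~Falco, de~Giovanni, Musella and Sysak (reference~\cite{degiova}, with \cite{kurda} also cited), and is used only as a black box in the subsequent arguments. So there is no ``paper's own proof'' to compare your proposal against.

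Regarding your outline on its own merits: the reduction in your second paragraph is correct. For a finitely generated $G_0\le G$ the subgroup $H_0=G_0\cap H$ has index at most $t=|G/H|$, is finitely generated, and is nilpotent (as a finitely generated subgroup of the locally nilpotent group $H$); since $H_0$ lies inside the hypercenter of $G_0$ and $G_0$ satisfies the maximal condition, Baer's theorem indeed gives $\gamma_{n_0+1}(G_0)$ finite. Your hypercentrality argument for $G/N$ in the last paragraph is also sound, and $N$ is automatically normal because conjugation permutes the family of finitely generated subgroups.

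The gap you identify, however, is not a detail to be filled in --- it is the entire substance of the theorem. Your Dietzmann suggestion does not work as stated: even granting that every element of every $\gamma_{n_0+1}(G_0)$ has finite order and finite $G$-conjugacy class (the latter you do not argue, and it is not obvious), Dietzmann's lemma applies only to a \emph{finite} normal set, whereas your $N$ is generated by a union over all finitely generated $G_0$, which is a priori infinite. A bound on $|\gamma_{n_0+1}(G_0)|$ depending only on $t$ is exactly what the cited papers establish, by arguments that are not reproduced by your local-to-global assembly; without such a uniform bound your construction of $N$ does not terminate in a finite subgroup. In short, your proposal correctly locates the difficulty but does not resolve it.
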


We will also require the Dicman Lemma (see \cite[14.5.7]{rob}).

\begin{lemma}\label{dietz} In any group a normal finite subset consisting of elements of finite order generates a finite subgroup.
\end{lemma}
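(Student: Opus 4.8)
The plan is to reduce the finiteness of $H=\langle S\rangle$ to Schur's theorem by first establishing that $H$ is central-by-finite. Replacing $S$ by $S\cup S^{-1}$, I may assume $S=S^{-1}$: this keeps $S$ finite, normal, and consisting of elements of finite order, and it does not change $H=\langle S\rangle$ (note $H$ is a normal subgroup of $G$, since $g^{-1}Sg=S$ for all $g$). Every element of $H$ is then a product of elements of $S$.

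The key observation is that, because $S$ is finite and normal, conjugation yields an action of $G$ on the finite set $S$. Concretely, since $g^{-1}sg\in S$ for all $g\in G$ and $s\in S$, and conjugation by $g$ is an injection of the finite set $S$ into itself, the map sending $g$ to the permutation $s\mapsto g^{-1}sg$ of $S$ is a homomorphism $G\to \mathrm{Sym}(S)$ whose kernel is the centralizer $C=C_G(S)$. Hence $[G:C]\leq |S|!$ is finite. I would then note that any element of $H\cap C$ centralizes every element of $S$ and therefore centralizes $\langle S\rangle=H$, so that $H\cap C\leq Z(H)$. Since $[H:H\cap C]\leq[G:C]$ always holds (via the injection $h(H\cap C)\mapsto hC$), we obtain $[H:Z(H)]\leq[H:H\cap C]\leq |S|!$, and so $H$ is central-by-finite.

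With this in hand the conclusion follows quickly. By Schur's theorem the commutator subgroup $H'$ is finite. The quotient $H/H'$ is abelian and is generated by the finitely many images of the elements of $S$, each of which has finite order; since the torsion elements of an abelian group form a subgroup, $H/H'$ is a finitely generated torsion abelian group and hence finite. Therefore $|H|=|H/H'|\cdot|H'|$ is finite, as required.

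The main obstacle is the central-by-finite step, and in particular resisting the temptation to prove finiteness directly by a collection process on words in $S$. Such an approach would attempt to sort a product of generators into a canonical order using the identity $ts=s(s^{-1}ts)$; the difficulty is that the conjugate $s^{-1}ts$, though again an element of $S$, need not respect any fixed ordering of $S$, so the rewriting need not terminate and the word length need not be controlled. The permutation-action argument circumvents this entirely by extracting finiteness from the finite set $S$ itself, rather than from a normal form for the elements of $H$.
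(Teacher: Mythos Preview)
Your proof is correct. The paper does not actually supply a proof of this lemma: it merely records it as the Dicman Lemma with a reference to Robinson's textbook \cite[14.5.7]{rob}. The argument you give --- pass to the finite-index centralizer $C_G(S)$ via the permutation action on $S$, deduce that $H=\langle S\rangle$ is central-by-finite, apply Schur's theorem to get $H'$ finite, and finish with the abelian quotient $H/H'$ --- is precisely the standard textbook proof, so there is nothing to compare.
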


In \cite{Pl} Plotkin proved that if a group $G$ has an ascending series whose quotients locally satisfy the maximal condition, then the Engel elements of $G$ form a locally nilpotent subgroup. In particular we have the following lemma.
\begin{lemma}\label{plotkin} Let $G$ be a group having an ascending series whose quotients locally satisfy the maximal condition and let $a\in G$ be an Engel element. Then $\langle a^G\rangle$ is locally nilpotent.
\end{lemma}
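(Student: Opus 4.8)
The plan is to deduce the lemma directly from Plotkin's theorem quoted just above, so the argument is almost entirely bookkeeping. First I would record the elementary observation that the Engel condition is invariant under automorphisms: if $g$ is an Engel element of $G$ and $\varphi\in\operatorname{Aut}(G)$, then for each $x\in G$, choosing $n$ with $[\varphi^{-1}(x),{}_n\, g]=1$ gives $[x,{}_n\, \varphi(g)]=1$, so $\varphi(g)$ is again an Engel element. In particular the set of all Engel elements of $G$ is closed under conjugation by elements of $G$.

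Next, since $G$ has an ascending series whose quotients locally satisfy the maximal condition, Plotkin's theorem tells us that the set $L$ of Engel elements of $G$ is a subgroup and that $L$ is locally nilpotent. Combined with the automorphism-invariance just noted, $L$ is a normal (in fact characteristic) subgroup of $G$. Now $a\in L$ because $a$ is an Engel element, and normality of $L$ forces $a^g\in L$ for every $g\in G$; hence the normal closure $\langle a^G\rangle$ is contained in $L$. Finally, local nilpotency passes to subgroups — any finite subset of $\langle a^G\rangle$ is a finite subset of $L$ and therefore generates a nilpotent subgroup — so $\langle a^G\rangle$ is locally nilpotent, as required.

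There is no real obstacle here: all the substantive work is carried by the cited theorem of Plotkin, and the only things needing (routine) verification are that the Engel condition is preserved by inner automorphisms, so that the locally nilpotent subgroup of Engel elements is normal and hence absorbs the entire normal closure of $a$, and the standard fact that a subgroup of a locally nilpotent group is locally nilpotent.
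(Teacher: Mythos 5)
Your proposal is correct and is exactly the deduction the paper intends: the paper gives no separate proof, simply noting that the lemma follows ``in particular'' from Plotkin's theorem that the Engel elements form a locally nilpotent subgroup, which (being conjugation-invariant) is normal and hence contains $\langle a^G\rangle$. Your spelled-out verification of automorphism-invariance and the passage of local nilpotency to subgroups fills in precisely the routine steps the paper leaves implicit.
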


Linear groups are naturally equipped with the Zarisski topology. If $G$ is a linear group, the connected component of $G$ containing 1 is denoted by $G^0$. We will use (sometimes implicitly) the following facts on linear groups. All these facts are well-known and are provided here just for the reader's convenience.

\begin{itemize}
\item If $G$ is a linear group and $N$ a normal subgroup which is closed in the Zarissky topology, then $G/N$ is linear (see \cite[Theorem 6.4]{wehr}).

\item Since finite subsets of $G$ are closed in the Zarisski topology, it follows that any finite subgroup of a linear group is closed. Hence $G/N$ is linear for any finite normal subgroup $N$.

\item If $G$ is a linear group, the connected component $G^0$ has finite index in $G$ (see \cite[Lemma 5.3]{wehr}).

\item Each finite conjugacy class in a linear group centralizes $G^0$ (see \cite[Lemma 5.5]{wehr}).

\item In a linear group any descending chain of centralizers is finite. This follows from \cite[Lemma 5.4]{wehr} and the fact that the Zarissky topology satisfies the descending chain condition on closed sets.

\item A linear group generated by normal nilpotent subgroups is nilpotent (see Gruenberg \cite{grue}).

\item Tits alternative: A finitely generated linear group either is virtually soluble or contains a subgroup isomorphic to a nonabelian free group (see \cite{tits}).

\item The Burnside-Schur theorem: A periodic linear group is locally finite (see \cite[9.1]{wehr}).

\item Zassenhaus theorem: A locally soluble linear group is soluble. Every linear group contains a unique maximal soluble normal subgroup (see \cite[Corollary 3.8]{wehr}).

\item Since the closure in the Zarisski topology of a soluble subgroup is again soluble (see \cite[Lemma 5.11]{wehr}), it follows that the unique maximal soluble normal subgroup of a linear group is closed. In particular, if $G$ is linear and $R$ is the unique maximal soluble normal subgroup of $G$, then $G/R$ is linear and has no nontrivial normal soluble subgroups.

\item A locally nilpotent linear group is hypercentral (see \cite{gara} or \cite{grue}).

\item Gruenberg: The set of Engel elements in a linear group $G$ coincides with the Hirsch-Plotkin radical of $G$. The set of right Engel elements coincides with the hypercenter of $G$ (see \cite{grue}).
\end{itemize}

\noindent Here, as usual, the Hirsch-Plotkin radical of a group is the maximal normal locally nilpotent subgroup. An element $g\in G$ is a right Engel element if for each $x\in G$ there exists a positive integer $n$ such that $[g,{}_n\, x]=1$.

\section{Almost Engel elements in virtually soluble groups} 

In the present section we give certain criteria for a group containing almost Engel elements to be finite-by-nilpotent or finite-by-hypercentral. In particular, we prove that a  virtually soluble group generated by finitely many almost Engel elements is finite-by-nilpotent (Theorem \ref{ccc}).

\begin{lemma}\label{zero} Let $G=H\langle a_1,\dots,a_s\rangle$, where $H$ is a normal subgroup and $a_i$ are almost Engel elements. Assume that $G/H$ is nilpotent. If $N\leq H$ is a finite normal subgroup of $H$, then $\langle N^G\rangle$ is finite.
\end{lemma}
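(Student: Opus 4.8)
The plan is to reduce to the case where $G/H$ is abelian and then exploit the fact that each $a_i$ has a finite Engel sink. First I would observe that it suffices to show $\langle N^{\langle a\rangle H}\rangle$ is finite whenever $a$ is one of the $a_i$, together with an induction on the nilpotency class of $G/H$: writing $K=\langle N^{H\gamma_2(G/H)\text{-preimage}}\rangle$, or more carefully iterating, one peels off one $a_i$ at a time. Since $N$ is normal in $H$ and $H\trianglelefteq G$, the subgroup $\langle N^G\rangle$ is generated by the $H$-conjugacy-stable set $\{N^{w}: w\in\langle a_1,\dots,a_s\rangle\}$, so by the Dicman Lemma (Lemma \ref{dietz}) it is enough to prove that this set of conjugates is \emph{finite} (its elements automatically have finite order, being conjugates of elements of the finite group $N$, and the generated subgroup is normal in $G$ by construction). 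Thus the whole problem becomes: show that $N$ has only finitely many distinct conjugates under $\langle a_1,\dots,a_s\rangle$.

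The key step is the case $s=1$. Fix $a=a_1$ and let $b\in N$. I want to bound the orbit $\{b^{a^k}: k\in\mathbb Z\}$. Consider the commutators $[b,{}_n\,a]$. Because $a$ is almost Engel, for all sufficiently large $n$ these lie in the finite set $\mathcal E(a)$; moreover each $[b,{}_n\,a]$ lies in $N$ for every $n$ since $N\trianglelefteq H\trianglelefteq G$ and $b\in N$ forces $[b,{}_n\,a]\in N$ (as $[b,a]=b^{-1}b^a\in N$ because $N$ is normal in $H$ and $b^a\in H$... here one needs $a$ to normalize $H$, which it does). Hence $[b,{}_n\,a]\in N\cap\mathcal E(a)$ for $n$ large, a finite set, and there are only finitely many possible values. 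Now the sequence $b, b^a, b^{a^2},\dots$ satisfies $b^{a^{k+1}}=b^{a^k}\cdot[b^{a^k},a]$ — wait, more usefully: the elements $c_k:=[b,{}_k\,a]$ eventually cycle among finitely many values in $N$, and $b^{a^k}=b\,c_1\,c_1^{a}\cdots$ — rather, one uses that the map $x\mapsto x^a$ restricted to $N$ is a permutation of the finite set $N$, hence has finite order as a permutation, so the $\langle a\rangle$-orbit of $N$ itself is finite! Indeed $N^a\subseteq N$ need not hold, but $N^a$ is a finite subgroup of $H$ of the same order; the point is that $\langle N^{\langle a\rangle}\rangle$ is a subgroup generated by finitely-many-at-a-time finite conjugates — to make this an honest finiteness argument I would instead argue that $\langle N^{\langle a\rangle}\rangle/\langle N^{\langle a\rangle}\rangle'$ is finitely generated and periodic hence finite if the conjugates are finite in number, which is what the sink gives. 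Concretely: let $L=\langle N, N^a, N^{a^{-1}}, N^{a^2},\ldots\rangle$; using the sink, $N^{a^n}$ stabilizes (up to finitely many exceptional exponents) in terms of the finite data $N\cap\mathcal E(a)$, and an induction on the order $|N|$ together with the descending chain condition on centralizers in linear groups pins down finiteness of $L$.

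The main obstacle I expect is precisely this last point: passing from ``the Engel sink $\mathcal E(a)$ is finite'' to ``$N$ has finitely many $\langle a\rangle$-conjugates.'' The subtlety is that $\mathcal E(a)$ controls the long commutators $[b,{}_n\,a]$ but not directly the conjugates $b^{a^n}$, and the recursion $b^{a^{n}} = b\cdot \prod_{i=1}^{n}[b,{}_i\,a]^{(\cdots)}$ involves conjugates of the $c_i$, so the finiteness of the $c_i$'s does not immediately bound $b^{a^n}$ unless we also control how $\langle a\rangle$ acts on them. I would handle this by working inside the finite set $N$: the assignment $b\mapsto b^a$ is a bijection $N\to N^{a}$, and iterating, $b^{a^n}\in N^{a^n}$; the sets $N^{a^n}$ all have size $|N|$ and live in $H$, and the almost-Engel condition forces the ``drift'' $N^{a^n}\to N^{a^{n+1}}$ to be eventually governed by the finite set $\mathcal E(a)$, which after an induction on $|N|$ (splitting off a proper $a$-invariant-up-to-finiteness subgroup, e.g. a minimal one, and quotienting) yields that only finitely many distinct $N^{a^n}$ occur. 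Once $s=1$ is done, the general case follows by the reduction in the first paragraph combined with the nilpotency of $G/H$, which guarantees the iterated process terminates. Throughout, linearity is used via Lemma \ref{dietz}'s hypotheses being met and, if needed, via the descending chain condition on centralizers to terminate inductions.
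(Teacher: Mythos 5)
Your plan correctly isolates the crux --- getting from ``the long commutators $[x,{}_n\,a]$ fall into the finite sink'' to ``$\langle N^{\langle a\rangle}\rangle$ is finite'' --- but it never closes that gap, and two of the devices you propose for it are actually false. First, $[b,a]=b^{-1}b^a$ does \emph{not} lie in $N$ for $b\in N$: normality of $N$ in $H$ only gives $b^a\in N^a\leq H$, not $b^a\in N$; if $[N,a]\subseteq N$ held, then $N$ would already be normalized by $a$ and the lemma would be immediate. Second, $x\mapsto x^a$ is not a permutation of $N$ (you concede this yourself). The remaining fallback --- ``induction on $|N|$ together with the descending chain condition on centralizers'' --- is both unspecified and unavailable: Lemma~\ref{zero} is a statement about abstract groups with a nilpotent quotient, proved before any linearity enters, so DCC on centralizers cannot be invoked. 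As written, the claim that $N$ has finitely many $\langle a\rangle$-conjugates is exactly the content to be proved, and the proposal circles it without an argument.

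The paper's resolution of the $s=1$ case is a specific construction you are missing: let $M$ be the subgroup generated by \emph{all} iterated commutators $[x,{}_j\,a]$ with $x\in N$ and $j\geq 0$. This $M$ contains $N$ (take $j=0$), is normalized by $a$ because of the identity $[x,{}_j\,a]^a=[x,{}_j\,a]\,[x,{}_{j+1}\,a]$, and is finite: each $[x,{}_j\,a]$ lies in $\prod_{i=0}^{j}N^{a^i}$ (a finite subgroup, being a product of finitely many normal subgroups of $H$), while for large $j$ these commutators lie in the finite sink, whose elements are themselves commutators of bounded length; hence $M\leq\prod_{i=0}^{k}N^{a^i}$ for a single $k$. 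Since $N^{a}\subseteq N\cdot[N,a]\subseteq M$ and $M^a=M$, one gets $\langle N^G\rangle=\langle M^H\rangle\leq\prod_{i=0}^{k}N^{a^i}$, finite --- no appeal to Dicman or to linearity is needed. Your outer reduction (peel off one $a_i$ at a time for abelian $G/H$, then induct on the nilpotency class via $\langle a_s^G\rangle$, which is generated by finitely many conjugates of $a_s$, each still almost Engel) matches the paper in spirit, but without the construction of $M$ the base case, and hence the whole proof, does not go through.
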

\begin{proof} Suppose first that $s=1$ and write $a$ in place of $a_1$. Let $M$ be the subgroup generated by all commutators of the form $[x,{}_j\, a]$, where $x\in N$ and $j$ is a nonnegative integer. Since both $N$ and $\mathcal E(a)$ are finite, it follows that there exists an integer $k$ such that $M$ is contained in the product $\prod_{i=0}^kN^{a^i}$. It is clear that the product $\prod_{i=0}^kN^{a^i}$ is normal in $H$ and $a$ normalizes $M$. Therefore $\langle M^H\rangle$ is normal in $G$ and is contained in $\prod_{i=0}^kN^{a^i}$. Moreover, $\langle N^G\rangle=\langle M^H\rangle$ so in the case where $s=1$ the lemma follows.

Therefore we will assume that $s\geq2$ and use induction on $s$. Assume additionally that $G/H$ is abelian. Set $H_0=H$ and $H_i=H_{i-1}\langle a_i\rangle$ for $i=1,\dots,s$. The subgroups $H_i$ are normal in $G$ and $H_s=G$. By induction, $K=\langle N^{H_{s-1}}\rangle$ is finite. Since $G=H_{s-1}\langle a_s\rangle$, the above paragraph shows that $\langle K^G\rangle$ is finite. Obviously, $\langle K^G\rangle=\langle N^G\rangle$ and so in the case where $G/H$ is abelian the lemma follows.

We will now allow $G/H$ to be nonabelian, say of nilpotency class $c$. We will use induction on $c$. Set $B=\langle a_s^G\rangle$ and $G_1=HB$. Since $G/H$ is a finitely generated nilpotent group, it follows that each subgroup of $G/H$ is finitely generated and so $B$ has finitely many conjugates of $a_s$, say $a_s^{g_1}\dots,a_s^{g_r}$ such that $G_1=H\langle a_s^{g_1}\dots,a_s^{g_r}\rangle$. Since $G_1/H$ has nilpotency class at most $c-1$, by induction $\langle N^{G_1}\rangle$ is finite. We now notice that $G=G_1\langle a_1,\dots,a_{s-1}\rangle$ so the induction on $s$ completes the proof.
\end{proof}

\begin{lemma}\label{cyclic} Let $G=H\langle a\rangle$, where $H$ is a virtually abelian normal subgroup and $a$ is an almost Engel element. Then $\langle a^G\rangle$ is finite-by-(locally nilpotent).
\end{lemma}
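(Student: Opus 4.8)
The plan is to reduce the statement to a single finiteness assertion about the Engel sink of $a$ and then to conclude with Plotkin's Lemma and the Hirsch--Plotkin theorem. Put $R=\langle a^G\rangle$. Using $H\trianglelefteq G$ and $(a^h)^a=a^{h^a}$ one checks that $R=\langle a^H\rangle$, so $S:=R\cap H$ is normal in $G$, the quotient $R/S$ embeds into $G/H\cong\langle a\rangle/(\langle a\rangle\cap H)$ and is therefore cyclic, and $S$ is virtually abelian. By Lemma~\ref{passma} pick a characteristic abelian subgroup $B$ of finite index in $S$ (so $B\trianglelefteq G$); then $1\le B\le S\le R$ is a subnormal series with abelian, finite and cyclic factors, all locally satisfying the maximal condition. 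Hence, by Lemma~\ref{plotkin}, the normal closure in $R$ --- or in any quotient of $R$, since such a quotient has a series of the same type --- of any left Engel element is locally nilpotent. Since $R$ is generated by the conjugates $a^g$, each of which is almost Engel with sink $\mathcal E(a)^g$, it is now enough to find a \emph{finite} normal subgroup $N$ of $G$ with $N\le R$ and $\mathcal E(a,R)\subseteq N$: for then $[x,{}_n\,a]\in\mathcal E(a,R)\subseteq N$ for all $x\in R$ and all large $n$, so $aN$, and with it every $a^gN$ (as $x^{g^{-1}}\in R$ and $\mathcal E(a,R)^g\subseteq N$), is a left Engel element of $R/N=\langle(aN)^{G/N}\rangle$; the subgroups $\langle(a^gN)^{R/N}\rangle$ are then locally nilpotent and normal in $R/N$ and generate it, so $R/N$ is locally nilpotent and $R$ is finite-by-(locally nilpotent). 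Taking $N=\langle\mathcal E(a,R)^G\rangle$ (which lies in $R$, as $\mathcal E(a,R)\subseteq R\trianglelefteq G$), the lemma reduces to showing that $\langle\mathcal E(a,R)^G\rangle$ is finite.

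To prove this I would invoke Dicman's Lemma~\ref{dietz}, for which it suffices that (i) every element of $\mathcal E(a,R)$ has finite order, and (ii) $\mathcal E(a,R)$ has only finitely many $G$-conjugates. The structural input is the action of $a$ on $B$: the map $x\mapsto[x,a]$ permutes the finite sets $\mathcal E(a,R)$ and $\mathcal E(a,B)\subseteq\mathcal E(a,R)\cap B$ (minimality of the sinks forces bijectivity), so these sinks split into cycles; thus $[\,\cdot\,,{}_L\,a]$ fixes each element of $\mathcal E(a,B)$ for a suitable $L$, and on the torsion subgroup $T$ of $B$ the subgroup $\langle\mathcal E(a,T)\rangle$ is finite. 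On the torsion-free section $B/T$ the element $a$ must act as a left Engel element: a non-trivial periodic orbit of $x\mapsto[x,a]$ there would produce a non-zero finitely generated torsion-free $\langle a\rangle$-submodule $W\cong\mathbb Z^s$ of $B/T$ on which $a-1$ acts as an automorphism of finite order, whence \emph{every} orbit $\{[u,{}_n\,a]:n\ge0\}$ with $u\in W$ would lie inside the single finite set $\mathcal E(a)$ and force it to contain the infinite set $W$ --- impossible. This ``no periodicity modulo torsion'' confines $\mathcal E(a,R)$ --- which, being iterated values of $[\,\cdot\,,a]$, lies in $\bigcap_k\gamma_k(R)\le S$ --- into the torsion, and gives (i).

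For (ii) one bounds the number of $S$-conjugates of an element of $B$ by $[S:B]$, notes that the cycle structure makes $\langle\mathcal E(a,R)\rangle$ invariant under $\langle a\rangle$ and hence (as $R=S\langle a\rangle$) possessed of only finitely many $R$-conjugates, and then passes from $R$ to $G$ by extracting a suitable finite subgroup normal in $H$ and applying Lemma~\ref{zero} (legitimate since $G/H\cong\langle a\rangle/(\langle a\rangle\cap H)$ is cyclic, hence nilpotent). I expect exactly this last transition --- from $R$-conjugates to $G$-conjugates, i.e.\ promoting a finite $R$-normal subgroup containing $\mathcal E(a,R)$ to a finite $G$-normal one --- to be the main obstacle, since it is where the finiteness of $S/B$ and the failure of $\langle a^R\rangle$ to be normal in $G$ must be absorbed and where Lemmas~\ref{zero} and~\ref{dietz} do the real work; by contrast the torsion-free part is essentially immediate once one observes that a single global finite sink leaves no room for periodic $(a-1)$-orbits.
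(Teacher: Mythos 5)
Your reduction is sound: $R=\langle a^H\rangle$, the normal series $1\le B\le S\le R$ with abelian, finite and cyclic factors lets you invoke Lemma~\ref{plotkin}, and it is indeed enough to trap $\mathcal E(a,R)$ inside a finite normal subgroup of $G$ --- this is exactly how the paper's own proof finishes. The problem is that everything you then say towards the finiteness of $\langle\mathcal E(a,R)^G\rangle$ treats only the easy fragment of the sink. Your torsion argument lives entirely inside the abelian subgroup $B$: the observation that $\mathcal E(a,B)$ is a finite subgroup, and that a periodic $(a-1)$-orbit in the torsion-free quotient $B/T$ would force the finite set $\mathcal E(a)$ to contain an infinite subgroup, correctly yields $\mathcal E(a,R)\cap B\subseteq T$. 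But $\mathcal E(a,R)$ is only known to lie in the virtually abelian group $S=R\cap H$, and nothing you write excludes sink elements of infinite order in $S\setminus B$; there the torsion elements need not form a subgroup, $x\mapsto[x,a]$ is no longer an endomorphism, and the relation $y=[y,{}_L\,a]$ cannot be pushed through $y\mapsto y^{[S:B]}$ down into $B$. So requirement (i) for Dicman's Lemma is not established.

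Requirement (ii) is in worse shape, and it is where the real content of the lemma sits. Dicman needs $\mathcal E(a,R)^{G}$ to be a finite set (by the $a$-invariance of the sink this amounts to $\mathcal E(a,R)^{H}$ being finite), i.e.\ the sink elements must have centralizers of finite index in $H$. Your bound by $[S:B]$ only controls conjugation by $S$, which may have infinite index in $H$, and the appeal to Lemma~\ref{zero} presupposes that you already possess a \emph{finite subgroup normal in $H$} containing the sink --- which is precisely what has to be proved. The paper manufactures this object by a quite different mechanism: it first inducts on $|\mathcal E(a)|$ to reduce to the case where $a$ is Engel on the abelian part $V$ of $H$, then shows that $T=\langle\mathcal E(a),a\rangle$ is finite-by-nilpotent via the filtration $C_i$ and Baer's theorem (so that $\langle\mathcal E(a)\rangle\le\gamma_r(T)$ is finite), and finally proves $V\le Z_\infty(VT)$ and applies Theorem~\ref{sysak} to place $\mathcal E(a)$ in a finite subgroup normal in the finite-index subgroup $VT$, at which point Dicman applies in $G$. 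None of this machinery has a counterpart in your sketch, so the step you yourself flag as ``the main obstacle'' is genuinely open rather than merely technical.
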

\begin{proof} Assume that $G$ is a counter-example with $|\mathcal E(a)|$ as small as possible. In view of Lemma \ref{passma} we can choose a maximal characteristic abelian subgroup $V$ in $H$. Since $V$ is abelian, we have $[v_1,a][v_2,a]=[v_1v_2,a]$ for any $v_1,v_2\in V$. In other words, a product of two commutators of the form $[v,a]$, where $v\in V$, again has the same form. Therefore $\mathcal E(a,V)$ is a finite subgroup. Obviously, the normalizer in $G$ of $\mathcal E(a,V)$ has finite index. It follows that $\mathcal E(a,V)$ is contained in a finite normal subgroup $N$. If $\mathcal E(a,V)\neq1$, we pass to the quotient $G/N$ and use induction on $|\mathcal E(a)|$. Therefore without loss of generality we will assume that $\mathcal E(a,V)=1$, that is, $a$ is Engel in $V\langle a\rangle$. Since $\mathcal E(a)$ consists of commutators of the form $[x,a]$ with $x\in \mathcal E(a)$, it follows that $\mathcal E(a)\cap V=\{1\}$. Let $C_0=1$ and $$C_i=\{v\in V\ \vert\ [v,a]\in C_{i-1}\}$$ for $i=1,2,\dots$. Since $a$ is Engel in $V$, we have $V=\cup_i C_i$.

Let $T=\langle \mathcal E(a),a\rangle$ and $U=V\cap T$. We observe that $U$ is a finitely generated abelian subgroup. In view of the fact that $V$ is the union of the $C_i$ we deduce that there exists a positive integer $n$ such that $U=C_n\cap U$.

For $i=0,\dots,n$ set $U_i=C_i\cap U$. Thus, $U=U_n$. Observe that $U_1$ centralizes $a$ and therefore $U_1$ normalizes the set $\mathcal E(a)$. Denote by $W_{1}$ the intersection $U_1\cap C_G(\mathcal E(a))$. Since $\mathcal E(a)$ is finite, it follows that $W_1$ has finite index in $U_1$. Further, it is clear that $W_1$ is contained in the center $Z(T)$. 

The finiteness of the index $[U_1:W_1]$ implies that $U_2$ contains a normal in $T$ subgroup $W_2$ such that the index $[U_2:W_2]$ is finite, and $[W_2,T]\leq W_1$. Thus, $W_2$ is contained in $Z_2(T)$, the second term of the upper central series of $T$.

Next, in a similar way we conclude that $U_3\cap Z_3(T)$ has finite index in $U_3$ and so on. Eventually, we deduce that $U\cap Z_n(T)$ has finite index in $U$. Thus, $T/Z_n(T)$ is finite-by-cyclic and therefore there exists a positive integer $k$ such that $a^k\in Z_{n+1}(T)$. Hence, $T/Z_{n+1}(T)$ is finite and so, in view of Baer's theorem, we deduce that $T$ is finite-by-nilpotent. In particular, for some positive integer $r$ the subgroup $\gamma_r(T)$ is finite. The observation that for each $x\in {\mathcal E}(a)$ there exists $y\in {\mathcal E}(a)$ such that $x=[y,g]$ guarantees that $\mathcal E(a)$ is contained in $\gamma_r(T)$. In particular, we proved that the subgroup $\langle \mathcal E(a)\rangle$ is finite. Because $V$ is abelian, it is obvious that $V$ normalizes $V\cap\langle \mathcal E(a)\rangle$. Thus, $V\cap\langle\mathcal E(a)\rangle$ is a finite subgroup with normalizer of finite index. It follows that $V\cap\langle\mathcal E(a)\rangle$ is contained in a finite normal subgroup of $G$. We can factor out the latter and without loss of generality assume that $V\cap\langle\mathcal E(a)\rangle=1$. 

Recall that $C_1=C_V(a)$. Therefore $C_1$ normalizes $\langle\mathcal E(a)\rangle$ and in view of the fact that $V\cap\langle\mathcal E(a)\rangle=1$ we conclude that $C_1$ centralizes $\langle\mathcal E(a)\rangle$. So $C_1\leq Z(VT)$. Same argument shows that $C_2/C_1\leq Z(VT/C_1)$ and, more generally, $C_{i+1}/C_i\leq Z(VT/C_i)$ for $i=0,1,2\dots$. Thus, $V\leq Z_\infty(VT)$ where $Z_\infty(VT)$ stands for the hypercenter of $T$. Of course, it follows that there exists a positive integer $k$ such that $a^k\in Z_\infty(VT)$. We deduce that $Z_\infty(VT)$ has finite index in $VT$. Theorem \ref{sysak} now tells us that $VT$ has a finite normal subgroup $N$ such that the quotient group $(VT)/N$ is hypercentral. The hypercentral groups are locally nilpotent and so $VT$ is finite-by-(locally nilpotent). The observation that for each $x\in {\mathcal E}(a)$ there exists $y\in {\mathcal E}(a)$ such that $x=[y,g]$ guarantees that $\mathcal E(a)$ is contained in $N$.

Since $VT$ has finite index in $G$, Dicman's lemma tells us that $G$ contains a finite normal subgroup $R$ such that $\mathcal E(a)\subseteq N\leq R$. The image of $a$ in $G/R$ is Engel and the required result follows from Lemma \ref{plotkin}.
\end{proof}

\begin{theorem}\label{ccc} A virtually soluble group generated by finitely many almost Engel elements is finite-by-nilpotent.
\end{theorem}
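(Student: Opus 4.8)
The plan is to reduce the statement, via finite generation, to showing that the normal closure of each generator is finite-by-(locally nilpotent), and then to prove that by induction on the derived length of a soluble normal subgroup of finite index, using Lemma \ref{cyclic} to start the induction and Lemma \ref{zero} together with Theorem \ref{sysak} to run the inductive step. First note that $G$ is finitely generated (by the $a_i$), so a finite-by-(locally nilpotent) structure on $G$ automatically upgrades to finite-by-nilpotent, since a finitely generated locally nilpotent group is nilpotent; hence it is enough to make $G$ finite-by-(locally nilpotent). For this it suffices in turn to prove that each $N_i:=\langle a_i^{G}\rangle$ is finite-by-(locally nilpotent): choosing for each $i$ a finite subgroup $F_i\le N_i$, normal in $G$, with $N_i/F_i$ locally nilpotent (that the finite part can be taken normal in $G$ follows from a Dicman's lemma manipulation, cf.\ Lemma \ref{dietz}, exactly as at the end of the proof of Lemma \ref{cyclic}), the product $F:=F_1\cdots F_s$ is a finite normal subgroup of $G$, and $G/F$ is generated by the normal locally nilpotent subgroups $N_iF/F$, hence is locally nilpotent.

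Now let $R$ be a soluble normal subgroup of finite index in $G$ and argue by induction on the derived length $d$ of $R$. If $d\le 1$, then $G$ is virtually abelian, and Lemma \ref{cyclic}, applied with the virtually abelian normal subgroup taken to be $G$ itself, gives that every $N_i$ is finite-by-(locally nilpotent); the reduction above then finishes this case. If $d\ge 2$, set $A:=R^{(d-1)}$, the last nontrivial term of the derived series of $R$; it is abelian and characteristic in $R$, hence normal in $G$. The quotient $G/A$ is finitely generated, is generated by the almost Engel elements $a_iA$, and has $R/A$ as a soluble normal subgroup of finite index and derived length $\le d-1$; so by the inductive hypothesis $G/A$ is finite-by-nilpotent. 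Pulling back, there is a normal subgroup $M$ of $G$ with $A\le M$, with $M/A$ finite, and with $G/M$ finitely generated nilpotent. In particular $M$ is abelian-by-finite, hence virtually abelian, and $G=M\langle a_1,\dots,a_s\rangle$.

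It remains to treat this configuration: $G=M\langle a_1,\dots,a_s\rangle$ with $M$ virtually abelian and normal in $G$ and $G/M$ finitely generated nilpotent. Fix $i$, write $a:=a_i$, $N:=N_i$, and put $K:=N\cap M$, which is virtually abelian and normal in $G$, with $N/K\cong NM/M$ finitely generated nilpotent; hence $N=K\langle a^{g_1},\dots,a^{g_r}\rangle$ for finitely many conjugates $a^{g_j}$ of $a$ (each of which lies in $N$ and is almost Engel). By Lemma \ref{passma} choose an abelian subgroup $V$ characteristic in $K$, hence normal in $G$, and of finite index in $K$. From here one follows the pattern of the proof of Lemma \ref{cyclic}, now inside $N$: because $V$ is abelian, each $\mathcal E(a^{g_j},V)$ is a finite subgroup of $V$; since $N/K$ is nilpotent, Lemma \ref{zero} (applied to $N=K\langle a^{g_1},\dots,a^{g_r}\rangle$) pushes the finite normal subgroups of $K$ generated by these sets up to finite normal subgroups of $N$, and after factoring them out one may assume each $a^{g_j}$ is Engel on $V$; analysing the ascending chain $C_0\le C_1\le\cdots$ cutting $V$ by simultaneous Engel depth with respect to the $a^{g_j}$ then shows that, modulo a suitable finite normal subgroup, $V$ lies in the hypercenter of $N$, so that $Z_\infty(N)$ has finite index in $N$; Theorem \ref{sysak} furnishes a finite normal subgroup of $N$ with hypercentral — hence locally nilpotent — quotient, so $N$ is finite-by-(locally nilpotent), and the reduction of the first paragraph completes the proof.

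I expect the last paragraph to be the main obstacle: carrying out the hypercenter analysis of Lemma \ref{cyclic} in the presence of several almost Engel conjugates $a^{g_1},\dots,a^{g_r}$ and a possibly non-finitely-generated abelian layer $V$. The nilpotency of $N/K$ is exactly what makes Lemma \ref{zero} applicable and so organises the several-generator bookkeeping, while Theorem \ref{sysak} is what turns ``$V$ is central in $N$ up to finite index'' into the finite-by-hypercentral conclusion. The delicate point is to reconcile these inputs — in particular to control the relevant finitely generated subgroups of $V$ simultaneously for all the $a^{g_j}$, which in the single-generator case of Lemma \ref{cyclic} was handled by the fact that $[G:V\langle a\rangle]$ is finite.
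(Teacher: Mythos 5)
Your overall skeleton (induction on the derived length of the soluble normal subgroup of finite index, with Lemma \ref{cyclic} at the bottom and Lemma \ref{zero} organising the several-generator bookkeeping) is the same as the paper's, and your base case is fine. But the inductive step is not actually carried out, and the route you choose for it is where the real difficulty sits. You reduce to $G=M\langle a_1,\dots,a_s\rangle$ with $M$ virtually abelian normal and then try to analyse $N_i=\langle a_i^{G}\rangle$ directly, writing $N_i=K\langle a^{g_1},\dots,a^{g_r}\rangle$ and proposing to rerun the argument of Lemma \ref{cyclic} with several almost Engel conjugates acting on the abelian layer $V$. That is a genuinely new statement --- a multi-generator version of Lemma \ref{cyclic} --- which is nowhere proved, and you yourself flag the two points where it breaks: the simultaneous Engel filtration $C_0\le C_1\le\cdots$ for several elements, and the control of finitely generated subgroups of $V$, which in Lemma \ref{cyclic} relied on $V\langle a\rangle$ having finite index in the ambient group (false here, since $N_i$ can have infinite index in $G$). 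As written this is a plan with the hardest step left open, not a proof.

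The paper avoids the multi-generator problem entirely: it applies the single-generator Lemma \ref{cyclic} to $G_i=H\langle a_i\rangle$, where $H$ is the full (virtually abelian) preimage of the finite part of $G/V$, so each $\langle a_i^{G_i}\rangle$ is finite-by-(locally nilpotent) with finite part normal in $H$; then Lemma \ref{zero} (whose whole purpose is this) pushes the product $N_1\cdots N_s$ into a finite subgroup normal in $G$; and modulo that subgroup the generators are honest Engel elements, so Plotkin's theorem (Lemma \ref{plotkin}) finishes. Two concrete defects of your version, besides the unproved main step: (i) your first-paragraph claim that the finite part of $N_i$ ``can be taken normal in $G$ by a Dicman manipulation as in Lemma \ref{cyclic}'' does not work in general --- in Lemma \ref{cyclic} the Dicman argument succeeds because the subgroup $VT$ containing the finite piece has \emph{finite index}, so the piece has finitely many conjugates, whereas a finite normal subgroup of $\langle a_i^{G}\rangle$ may have infinitely many $G$-conjugates; the correct tool is Lemma \ref{zero}, not Lemma \ref{dietz}; (ii) you never use the fact that, once the finite obstructions are factored out, the generators become Engel, which is the step that lets Plotkin's theorem replace all the remaining hypercenter analysis.
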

\begin{proof} Let $G$ be a virtually soluble group generated by finitely many almost Engel elements $a_1,\dots,a_s$ and let $S$ be a normal soluble subgroup of finite index in $G$. We assume that $S\neq1$ and let $V$ be the last nontrivial term of the derived series of $S$. By induction on the derived length of $S$ we assume that $G/V$ is finite-by-nilpotent. Therefore $G$ contains a normal subgroup $H$ such that $V$ has finite index in $H$ and the quotient $G/H$ is nilpotent. For $i=1,\dots,s$ set $G_i=H\langle a_i\rangle$. By Lemma \ref{cyclic} each subgroup $\langle a_i^{G_i}\rangle$ has a finite normal subgroup $N_i$ such that $\langle a_i^{G_i}\rangle/N_i$ is locally nilpotent. Since $G_i/H$ are abelian, it is clear that all quotients $G_i/H\cap N_i$ are locally nilpotent and so, replacing if necessary $N_i$ by $H\cap N_i$, without loss of generality we can assume that all subgroups $N_i$ are normal subgroups of $H$. Therefore the product of the subgroups $N_i$ is finite. By Lemma \ref{zero} the product of $N_1\cdots N_s$ is contained in a finite subgroup $N$ which is normal in $G$. Obviously the images in $G/N$ of the generators $a_1,\dots,a_s$ are Engel. Thus, $G/N$ is a virtually soluble group generated by finitely many Engel elements. It follows from Lemma \ref{plotkin} that $G/N$ is nilpotent. The proof is complete.
\end{proof}

The next lemma is well-known. For the reader's convenience we provide the proof.

\begin{lemma}\label{helpo} Let $G=H\langle a\rangle$, where $H$ is a nilpotent normal subgroup and $a$ is a nil element. Then $G$ is nilpotent.
\end{lemma}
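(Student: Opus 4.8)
The plan is to combine one structural observation about nil elements with an induction on the nilpotency class of $H$. The observation is this: if $A$ is an abelian subgroup of $G$ that is normal in $G$ and satisfies $[A,H]=1$, then $A\le Z_n(G)$, where $n$ is chosen so that $[x,{}_n\,a]=1$ for all $x\in G$. Indeed, since $H$ centralises $A$ and $G=H\langle a\rangle$, the conjugation action of $G$ on $A$ agrees with that of $\langle a\rangle$, so $[A,G]=[A,\langle a\rangle]$. Writing $A$ additively, the map $\psi\colon x\mapsto[x,a]$ is an endomorphism of $A$ with $\psi(A)=[A,\langle a\rangle]$ and $\psi^{\,j}(x)=[x,{}_j\,a]$; the nil condition on $a$ gives $\psi^{\,n}=0$. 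Now $\psi(A)=[A,G]$ is again an abelian normal subgroup of $G$ lying inside $A$, hence still centralised by $H$, so the same reasoning applies to it. Iterating, $[A,{}_j\,G]=\psi^{\,j}(A)$ for every $j$, whence $[A,{}_n\,G]=\psi^{\,n}(A)=1$, i.e.\ $A\le Z_n(G)$.

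With this in hand I would induct on the nilpotency class $c$ of $H$. If $c=0$ then $G=\langle a\rangle$ is cyclic. If $c\ge1$, set $Z=Z(H)$; this is nontrivial, characteristic in $H$, hence normal in $G$, and $H/Z$ is nilpotent of class $c-1$ while the image of $a$ in $G/Z$ is again nil (with the same $n$). By induction $G/Z$ is nilpotent, say of class $d$, so $\gamma_{d+1}(G)\le Z=Z(H)$. Thus $\gamma_{d+1}(G)$ is an abelian subgroup, normal in $G$, and centralised by $H$, so the observation gives $\gamma_{d+1+n}(G)=[\gamma_{d+1}(G),{}_n\,G]=1$. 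Hence $G$ is nilpotent, of class at most $d+n$.

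The genuinely essential point is the endomorphism observation in the first paragraph; once it is isolated, the rest is bookkeeping. The only things requiring a little care are checking that $[A,G]$ coincides with $[A,\langle a\rangle]$ (this is exactly where $[A,H]=1$ is used) and that the subgroup $[A,G]$ stays abelian, normal, and $H$-centralised so that the iteration is legitimate; and, in the induction, that $\gamma_{d+1}(G)$ lands inside $Z(H)$ rather than merely inside $H$. I do not anticipate a real obstacle here.
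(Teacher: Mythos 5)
Your proof is correct and follows essentially the same route as the paper: the paper likewise sets $K=Z(H)$, shows via the chain $K_{i+1}=[K_i,a]$ that $[K_{i-1},G]\leq K_i$ and hence $K\leq Z_n(G)$, and then inducts on the nilpotency class of $H$. Your endomorphism formulation of this step and your choice to quotient by $Z(H)$ rather than by $Z_n(G)$ are only cosmetic variations.
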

\begin{proof} Suppose that $a$ is $n$-Engel. Let $K=Z(H)$ and set $K_0=K$ and $K_{i+1}=[K_i,a]$ for $i=0,1,\dots$. Then $K_{n-1}\leq K\cap C_K(a)$ and so $K_{n-1}\leq Z(G)$. Moreover we observe that $[K_{i-1},G]\leq K_i$ and it follows that $K_{n-i}\leq Z_{i}(G)$ for $i=1,2,\dots,n$. Therefore $K\leq Z_{n}(G)$. Passing to the quotient $G/Z_{n}(G)$ and using induction on the nilpotency class of $H$ we deduce that if $H$ is nilpotent with class $c$, then $G$ is nilpotent with class at most $cn$.
\end{proof}

\begin{lemma}\label{helpo2} Let $G=H\langle a\rangle$, where $H$ is a hypercentral normal subgroup and $a$ is an Engel element. Then $G$ is hypercentral.
\end{lemma}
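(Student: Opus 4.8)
The plan is to reduce the statement to the single claim that $H$ is contained in the hypercenter $Z_\infty(G)$. Granting this, put $\bar G=G/Z_\infty(G)$. On one hand $\bar G$ is a homomorphic image of $G/H=\langle aH\rangle$, hence cyclic, and in particular abelian. On the other hand the (transfinite) upper central series of any group stabilises at its hypercenter, so $Z(G/Z_\infty(G))=1$ and therefore $Z_\infty(\bar G)=1$. But an abelian group coincides with its hypercenter, so $\bar G=Z_\infty(\bar G)=1$, that is, $G=Z_\infty(G)$ is hypercentral.

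The first step is an auxiliary fact which does not use hypercentrality of $H$: \emph{if $G=H\langle a\rangle$ with $H$ normal in $G$ and $a$ an Engel element, then $Z(H)\leq Z_\omega(G)$}. To prove it, set $K=Z(H)$; it is abelian and characteristic in $H$, hence normal in $G$. The map $\phi\colon K\to K$ given by $x\mapsto[x,a]$ is an endomorphism of $K$ (this uses that $K$ is abelian), it commutes with conjugation by $a$, and $H$ centralises $K$; consequently each $K_i:=\ker\phi^i$ is normal in $G$. Since $a$ is an Engel element, for every $x\in K$ we have $[x,{}_i\,a]=1$ for some $i$, so $K=\bigcup_i K_i$. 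Now $[K_i,H]=1$, hence $[K_i,G]=\langle[K_i,a]^G\rangle=\langle\phi(K_i)^G\rangle\leq\langle K_{i-1}^{\,G}\rangle=K_{i-1}$, and an obvious induction (starting from $K_0=1$) yields $K_i\leq Z_i(G)$ for all $i$. Therefore $K=\bigcup_i K_i\leq Z_\omega(G)$.

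To finish, put $Z=Z_\infty(G)$ and $\bar G=G/Z$, so that $Z_\infty(\bar G)=1$ as noted above. The subgroup $\bar H:=HZ/Z$ is a homomorphic image of the hypercentral group $H$, hence hypercentral, and it is normal in $\bar G=\bar H\langle\bar a\rangle$ with $\bar a$ again an Engel element. Applying the auxiliary fact to $\bar G$ gives $Z(\bar H)\leq Z_\omega(\bar G)\leq Z_\infty(\bar G)=1$. Since a nontrivial hypercentral group has nontrivial centre, we conclude $\bar H=1$, that is, $H\leq Z=Z_\infty(G)$, which is exactly what the reduction requires.

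I expect the only delicate points to be the bookkeeping with the transfinite upper central series: the facts that the upper central series stabilises at the hypercenter (so that $Z_\infty(G/Z_\infty(G))=1$), that a nontrivial hypercentral group has nontrivial centre, and that in the auxiliary fact the chain of the $K_i$ only reaches $Z_\omega(G)$ rather than a finite term. This is where the argument genuinely departs from Lemma \ref{helpo}: there the $n$-Engel hypothesis makes the corresponding series terminate after $n$ steps, which permits the clean induction on the nilpotency class of $H$ that is unavailable in the present setting; here one instead has to pass to the quotient by the whole hypercenter and exploit that its hypercenter is trivial.
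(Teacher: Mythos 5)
Your proof is correct and follows essentially the same route as the paper's: both exploit that the Engel condition on $a$ forces (part of) $Z(H)$ into the centre of $G$ and then conclude by passing to quotients along the upper central series. The paper compresses this to the observation that $1\neq C_{Z(H)}(a)\leq Z(G)$ together with an implicit transfinite induction, whereas you make the same mechanism explicit by proving $Z(H)\leq Z_\omega(G)$ via the kernel chain of $x\mapsto[x,a]$ and quotienting by the full hypercenter once.
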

\begin{proof} It is sufficient to show that $Z(G)\neq1$. Let $Z=Z(H)$. Since $a$ is an Engel element, $C_Z(a)\neq1$. Obviously, $C_Z(a)\leq Z(G)$. The proof is complete.
\end{proof}

\begin{lemma}\label{fini} Let $a$ be an almost Engel element in a group $G$ and assume that $\mathcal E(a)$ is contained in a locally nilpotent subgroup. Then the subgroup $\langle\mathcal E(a)\rangle$ is finite.
\end{lemma}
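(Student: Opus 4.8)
The plan is to exploit the recurrence property of Engel sinks together with local nilpotency of the ambient subgroup. Recall that for the minimal sink $\mathcal E(a)$, every element of $\mathcal E(a)$ is of the form $[y,a]$ for some $y\in\mathcal E(a)$; in particular $\mathcal E(a)\subseteq[\mathcal E(a),a]$, so $\mathcal E(a)$ is contained in the subgroup $\langle\mathcal E(a)\rangle'$ generated by commutators — more precisely, iterating shows $\mathcal E(a)\subseteq\gamma_k(\langle\mathcal E(a),a\rangle)$ for every $k$. I would set $L$ to be the locally nilpotent subgroup containing $\mathcal E(a)$, and work inside the finitely generated subgroup $M=\langle\mathcal E(a)\rangle\leq L$, which is nilpotent, say of class $c$.

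The key step is to show $\langle\mathcal E(a)\rangle$ is finitely generated and each generator has finite order, and then invoke a finiteness principle. First I would observe that $M=\langle\mathcal E(a)\rangle$ is finitely generated (it is generated by the finite set $\mathcal E(a)$) and nilpotent of some class $c$. Since each $x\in\mathcal E(a)$ lies in $\gamma_{c+1}(\langle\mathcal E(a),a\rangle)$ by the recurrence, but this needs $a$ to normalize things; instead, the cleaner route is: because $\mathcal E(a)$ is finite and consists of commutators $[y,a]$ with $y\in\mathcal E(a)$, the map $y\mapsto[y,a]$ is a surjection of the finite set $\mathcal E(a)$ onto itself, hence a bijection. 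So for every $x\in\mathcal E(a)$ and every $m$ there is $y_m\in\mathcal E(a)$ with $x=[y_m,{}_m\,a]$. Thus $x\in\gamma_{m+1}(\langle M,a\rangle)$ for all $m$, whence $x\in\bigcap_m\gamma_m(\langle M,a\rangle)$. Now $\langle M,a\rangle$ need not be nilpotent, so this intersection need not be trivial a priori — this is the obstacle I expect to be the crux.

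To get around it, I would restrict attention to $M$ alone: since $a$ normalizes $\mathcal E(a)$ as a set (up to the bijection just described, $a^{-1}\mathcal E(a)a$ permutes $\mathcal E(a)$ — actually one checks $x^a=x[x,a]$ lies in $M$ since both factors do), $a$ normalizes $M$. So $M\langle a\rangle=M\rtimes\langle a\rangle$ (or a quotient thereof) with $M$ nilpotent. Then, since $a$ acts on the finite set $\mathcal E(a)$ by the permutation $x\mapsto[x,a]$... but wait, $x\mapsto[x,a]$ is not conjugation. The honest reduction is: conjugation by $a$ permutes the finite generating set's images. Actually the safe statement is that $\langle a^M\rangle\cdot\langle a\rangle$ controls things; simpler still — I would use that $M$ is finitely generated nilpotent, so its torsion subgroup $\mathrm{Tor}(M)$ is finite and characteristic, and $M/\mathrm{Tor}(M)$ is torsion-free nilpotent. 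It then suffices to show each $x\in\mathcal E(a)$ is torsion. For this, pass to $\bar M=M/\mathrm{Tor}(M)$, torsion-free nilpotent, on which $a$ acts; the images $\bar x$ still satisfy $\bar x=[\bar y_m,{}_m\,\bar a]$ for suitable $\bar y_m$ ranging over the finite set $\overline{\mathcal E(a)}$. In a torsion-free nilpotent group the only element lying in $\gamma_m$ for all $m$ is $1$ — indeed $\bigcap_m\gamma_m(\langle\bar M,\bar a\rangle)$; but again $\langle\bar M,\bar a\rangle$ may not be nilpotent.

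Therefore the real argument should be: $a$ normalizes $M$, so form $N=M\langle a\rangle$; then $[M,{}_m\,a]$ is a descending chain of normal subgroups of $N$, and $\mathcal E(a)\subseteq[M,{}_m\,a]$ for all $m$. Since $M$ is finitely generated nilpotent, each $[M,{}_m\,a]$ is finitely generated; and $[M,{}_\infty\,a]:=\bigcap_m[M,{}_m\,a]$ satisfies $[[M,{}_\infty\,a],a]=[M,{}_\infty\,a]$. Now $[M,{}_\infty\,a]$ is a finitely generated nilpotent group $P$ with $[P,a]=P$; I claim $P$ is finite. Indeed if $P$ is infinite nilpotent finitely generated, then $P/P'$ is infinite abelian finitely generated, and $[P,a]=P$ forces $[P/P',a]=P/P'$, impossible for a nontrivial finitely generated abelian group acted on by a single automorphism with $(\sigma-1)$ surjective unless $P/P'$ is finite (a finitely generated abelian group $A$ with $(\sigma-1)A=A$ is finite, since $\sigma-1$ then induces an automorphism of $A$, forcing $\det(\sigma-1)=\pm1$ on the free part, but then iterating $\sigma-1$ can't be surjective onto a nonzero free part — more carefully, $(\sigma-1)A=A$ with $A$ f.g. abelian implies $A$ is finite because the free rank would satisfy $|\det(\sigma-1)|=1$, and one shows the torsion-free quotient must vanish). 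Hence $P/P'$ is finite, so $P$ is finite by nilpotency. Since $\mathcal E(a)\subseteq P$, the subgroup $\langle\mathcal E(a)\rangle\leq P$ is finite, completing the proof. The main obstacle, as indicated, is this last finiteness claim about a finitely generated nilpotent group $P$ with $[P,a]=P$; the argument reduces it to the abelianization and then to elementary structure theory of finitely generated abelian groups under a single automorphism.
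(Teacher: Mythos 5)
There is a genuine gap at the final step. Your reduction to the abelian case is fine (and parallels the paper's, which uses induction on the nilpotency class of $D=\langle\mathcal E(a)\rangle$ together with Schur's theorem to reduce to $D$ abelian with $D=[D,a]$; note also that your descending chain is vacuous, since $[M,a]$ is a subgroup of $M$ containing $\mathcal E(a)$, hence equals $M$, so $P=M$ from the start). The problem is the claim you yourself flag as the crux: \emph{a finitely generated abelian group $A$ with an automorphism $\sigma$ satisfying $(\sigma-1)A=A$ must be finite}. This is false. Take $A=\mathbb Z^2$ and $\sigma=\left(\begin{smallmatrix}2&1\\1&1\end{smallmatrix}\right)$; then $\det(\sigma-1)=-1$, so $\sigma-1$ is itself an automorphism of $A$ and $(\sigma-1)A=A$ with $A$ infinite. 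Your parenthetical argument ("iterating $\sigma-1$ can't be surjective onto a nonzero free part") is exactly where this breaks: once $|\det(\sigma-1)|=1$, iterating $\sigma-1$ is perfectly surjective. So finiteness cannot follow from $[P,a]=P$ alone.

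What is missing is any use of the almost Engel hypothesis beyond the recurrence $\mathcal E(a)=[\mathcal E(a),a]$. In the counterexample above, $a$ is not almost Engel on $A\rtimes\langle a\rangle$: the iterates $[x,{}_n\,a]=(\sigma-1)^nx$ escape every finite set for generic $x$, because $\sigma-1$ has an eigenvalue of modulus greater than $1$. The paper closes exactly this gap by quoting Lemma~2.3 of \cite{khushu}, which says that for an abelian group $D$ normalized by $a$ with $D=[D,a]$ and $D=\langle\mathcal E(a)\rangle$, the condition that \emph{every} $x\in D$ has $[x,{}_n\,a]\in\mathcal E(a)$ for all large $n$ forces $D=\mathcal E(a)$, hence $D$ finite. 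To repair your proof you would need to prove (or cite) such a statement; the structure theory of finitely generated abelian groups under a single automorphism is not enough.
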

\begin{proof} Set $D=\langle\mathcal E(a)\rangle$. Without loss of generality we can assume that $G=D\langle a\rangle$. Since $\mathcal E(a)$ is finite, $D$ is nilpotent and we can use induction on the nilpotency class of $D$. Thus, by induction assume that the quotient of $D$ over its center is finite. By Schur's theorem the derived group $D'$ is finite as well. Factoring out $D'$ we can assume that $D$ is abelian. So now $D$ is abelian and $D=[D,a]$. By \cite[Lemma 2.3]{khushu}, $D=\mathcal E(a)$ and hence $D$ is finite. 
\end{proof}

\begin{lemma}\label{nilpo} Let $G=H\langle a\rangle$, where $H$ is a hypercentral normal subgroup.

\begin{itemize}
\item[1.] If $a$ is almost Engel, then $G$ is finite-by-hypercentral.
\item[2.] If $H$ is nilpotent and $a$ is almost nil, then $G$ is finite-by-nilpotent.
\end{itemize}
\end{lemma}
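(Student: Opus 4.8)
The plan is to reduce both parts to one assertion: \emph{$G$ possesses a finite normal subgroup $N$ with ${\mathcal E}(a)\subseteq N$.} Granting this, for every $x\in G$ all sufficiently long commutators $[x,{}_n\, a]$ lie in ${\mathcal E}(a)\subseteq N$, so $aN$ is an Engel element of $G/N$; moreover $G/N=(HN/N)\langle aN\rangle$ and $HN/N$, as a homomorphic image of $H$, is hypercentral, so Lemma~\ref{helpo2} shows that $G/N$ is hypercentral. This proves part~1. For part~2 the same $N$ serves: here $H$, hence $HN/N$, is nilpotent, and almost nilness of $a$ provides an $n$ with $[x,{}_n\, a]\in{\mathcal E}(a)\subseteq N$ for all $x\in G$, so $aN$ is an $n$-Engel, in particular nil, element of $G/N$; Lemma~\ref{helpo} then shows that $G/N$ is nilpotent.

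Next come two reductions of the assertion itself. Since $G/H\cong\langle aH\rangle$ is abelian we have $[G,a]\subseteq H$, and since every element of ${\mathcal E}(a)$ equals $[y,a]$ for some $y\in{\mathcal E}(a)$, it follows that ${\mathcal E}(a)\subseteq H$. As $H$ is hypercentral it is locally nilpotent, so Lemma~\ref{fini} tells us that $\langle{\mathcal E}(a)\rangle$ is \emph{finite}. Thus what has to be shown is that a finite subgroup of the hypercentral normal subgroup $H$ is contained in a finite normal subgroup of $G$; equivalently, that $\langle{\mathcal E}(a)^G\rangle$ is finite.

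I would establish this by transfinite induction on the hypercentral length $\lambda$ of $H$, i.e. the least ordinal with $H=Z_\lambda(H)$. For $\lambda\le1$ the subgroup $H$ is abelian, so $G$ is as in Lemma~\ref{cyclic}; the proof of that lemma in fact produces a finite normal subgroup $R$ of $G$ with ${\mathcal E}(a)\subseteq R$. When $\lambda$ is a limit ordinal, the decisive observation is that ${\mathcal E}(a)$, being finite, is contained in $Z_{\mu_0}(H)$ for some $\mu_0<\lambda$; then $a$ is still almost Engel in $Z_{\mu_0}(H)\langle a\rangle$ with sink ${\mathcal E}(a)$, and $Z_{\mu_0}(H)$ is hypercentral of length less than $\lambda$, so the induction hypothesis applies to $Z_{\mu_0}(H)\langle a\rangle$ (and one also notes that in $G/Z_{\mu_0}(H)$ the image of $a$ has trivial sink, hence is Engel, so that $G/Z_{\mu_0}(H)$ is hypercentral by Lemma~\ref{helpo2}). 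When $\lambda=\mu+1$ is a successor, $H/Z_\mu(H)$ coincides with $Z(H/Z_\mu(H))$ and so is abelian; thus $G/Z_\mu(H)$ is covered by Lemma~\ref{cyclic}, whose proof yields a finite normal subgroup of $G/Z_\mu(H)$ containing the sink of the image of $a$, and the induction hypothesis is again applied to $Z_\mu(H)\langle a\rangle$. In every case one is left with a finite subgroup containing ${\mathcal E}(a)$ which is normal in some subgroup of $G$ of the shape $Z_\mu(H)\langle a\rangle$ (or is the preimage of a finite normal subgroup of $G/Z_\mu(H)$); one then manufactures from it a genuine finite normal subgroup of $G$ still containing ${\mathcal E}(a)$, using Lemma~\ref{zero} (with base $H$ and cyclic quotient $G/H$) and Dicman's Lemma~\ref{dietz}, and using Theorem~\ref{sysak} (respectively Baer's theorem) to turn ``the hypercenter has finite index'' into ``finite-by-hypercentral'' (respectively ``finite-by-nilpotent''); for part~2 the length of $H$ is finite, so this induction has no limit steps.

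The main obstacle is this last manufacturing step. The subgroup $Z_\mu(H)\langle a\rangle$ need not be normal in $G$, and the normal closure in the hypercentral group $H$ of a finite subgroup of $H$ need not itself be finite, so one cannot feed the inductively obtained subgroup straight into Lemma~\ref{zero}. What saves the situation is that the finite subgroup can first be pushed down \emph{into} $Z_\mu(H)$: when the image of $a$ has infinite order modulo $Z_\mu(H)$ a finite subgroup of $Z_\mu(H)\langle a\rangle$ automatically lies in $Z_\mu(H)$ (its image in the infinite cyclic quotient is trivial), and the case in which that image has finite order is dealt with separately; once inside $Z_\mu(H)$ the subgroup is normal there, and iterating down the upper central series of $H$ one reaches normality in $H$ before invoking Lemma~\ref{zero}. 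Carrying this bookkeeping through all the cases, and checking at each step that ${\mathcal E}(a)$ is not lost from the finite subgroup being carried along, is the technical core of the proof.
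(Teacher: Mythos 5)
Your reduction is sound as far as it goes: $\mathcal E(a)\subseteq H$ because every element of the sink has the form $[y,a]\in[G,a]\subseteq H$, Lemma \ref{fini} makes $\langle\mathcal E(a)\rangle$ finite, and once a finite normal subgroup $N$ of $G$ containing $\mathcal E(a)$ is found, Lemmas \ref{helpo2} and \ref{helpo} finish both claims (and, as you implicitly need, any finite normal subgroup modulo which $a$ is Engel is itself a sink for $a$, so it automatically contains the minimal sink). The problem is that the one thing that actually has to be proved --- that $\mathcal E(a)$ lies in a finite subgroup \emph{normal in $G$} --- is exactly what you defer to the ``manufacturing step'', and the sketch you give for that step does not work. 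First, ``once inside $Z_\mu(H)$ the subgroup is normal there'' is false for $\mu>1$: $Z_\mu(H)$ is not abelian and a finite subgroup of it need not be normal in it. Second, even if you had a finite subgroup $F\supseteq\mathcal E(a)$ normal in $Z_\mu(H)$, none of the tools you name yields a finite normal closure in $H$, let alone in $G$: Lemma \ref{zero} requires $F$ to be normal in the \emph{base} $H$ of a decomposition $G=H\langle a_1,\dots,a_s\rangle$ with $G/H$ finitely generated nilpotent, and $H$ over $Z_\mu(H)$ is neither finitely generated nor, in general, nilpotent; Dicman's Lemma requires the elements of $F$ to have finite conjugacy classes in $G$, which nothing in your argument supplies (already for $f\in Z_2(H)$ the map $h\mapsto[f,h]$ is a homomorphism $H\to Z(H)$ whose image is finite only when $C_H(f)$ has finite index). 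So the transfinite induction never closes, and the acknowledged ``technical core'' is precisely the content of the lemma.

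For contrast, the paper avoids all of this by a different induction, on $|\mathcal E(a)|$: it takes $N$ maximal normal in $G$ with $N\cap\mathcal E(a)=\{1\}$ and $M$ minimal normal with $N<M$ (so that $M/N$ is central in $H/N$ by hypercentrality), and shows that $D=\langle\mathcal E(a)\rangle\cap M$ is normal in $M$ --- using the normalizer condition in hypercentral groups --- and then characteristic, because $D$ is the unique minimal normal subgroup of $M\langle a\rangle$ with hypercentral quotient (Lemma \ref{helpo2}). Hence $D$ is a nontrivial finite normal subgroup of $G$ meeting $\mathcal E(a)$, which can be factored out to decrease $|\mathcal E(a)|$. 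Some device of this kind --- an argument that forces a piece of $\langle\mathcal E(a)\rangle$ to be characteristic, and hence normal in $G$ --- is what your proposal is missing.
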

\begin{proof} We will prove Claim 1 first. Assume that $a$ is almost Engel. Let $N$ be the product of all normal subgroups of $G$ whose intersection with $\mathcal E(a)$ is $\{1\}$. It is easy to see that $N\cap \mathcal E(a)=\{1\}$ and $N$ is the unique maximal normal subgroup with that property. Therefore $K\cap \mathcal E(a)\neq\{1\}$ whenever $K$ is a normal subgroup containing $N$ as a proper subgroup. Since $\mathcal E(a)$ is finite, the group $G$ contains a minimal normal subgroup $M$ such that $N<M$. Taking into account that $H$ is hypercentral, we observe that $M/N$ is central in $H/N$.

Let $D=\langle \mathcal E(a)\rangle\cap M$. It follows that $M=ND$. Suppose that $D$ is not normal in $M$ and set $L=N_M(N_M(D))$. Since $M$ is hypercentral, it satisfies the normalizer condition and so $L\neq N_M(D)$. Obviously $a$ normalizes both $L$ and $N_M(D)$. Since $a$ acts on $L/N_M(D)$ as an Engel element, the centralizer of $a$ in $L/N_M(D)$ is nontrivial. Thus, $L$ has a subgroup $C$ such that $N_M(D)<C$ and $C$ normalizes $N_M(D)\langle a\rangle$. Of course, $D$ is normal in $N_M(D)\langle a\rangle$. By Lemma \ref{helpo2} the quotient of $N_M(D)\langle a\rangle$ by $D$ is hypercentral. It is easy to see that $D$ is a unique minimal normal subgroup of $N_M(D)\langle a\rangle$ whose quotient is hypercentral. Therefore $D$ is characteristic in $N_M(D)\langle a\rangle$ and so $C$ normalizes $D$. This is a contradiction since $N_M(D)<C$. 

Hence, $D$ is normal in $M$. Again, it is easy to see that $D$ is a unique minimal normal subgroup of $M\langle a\rangle$ whose quotient is hypercentral. Therefore $D$ is characteristic in $M$ and so it is normal in $G$. We pass to the quotient $G/D$ and Claim 1 now follows by straightforward induction on $|\mathcal E(a)|$. 

We now assume that $H$ is nilpotent and $a$ is almost nil. We already know that $G$ is finite-by-hypercentral. Factoring out a finite normal subgroup we can assume that $G$ is hypercentral. In that case $a$ is actually nil and so by Lemma \ref{helpo} $G$ is nilpotent. The proof of the lemma is complete.
\end{proof}

\section{Linear groups}

\begin{lemma}\label{nilpott} A virtually soluble almost Engel linear group is finite-by-hypercentral.
\end{lemma}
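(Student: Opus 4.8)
The plan is to combine the structure theory of virtually soluble linear groups with the ``single almost Engel element'' results of Section~3. First I would reduce to the case where $G$ has a closed abelian normal subgroup $V$ with $G/V$ hypercentral; then I would prove that the subgroup $D$ generated by all Engel sinks of $G$ is finite; and finally I would observe that $G/D$ is an Engel linear group, hence (equal to its Hirsch--Plotkin radical by Gruenberg, and a locally nilpotent linear group being hypercentral) hypercentral, so that $G$ is finite-by-hypercentral.

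For the reduction, replace a soluble normal subgroup of finite index by its Zariski closure, so that $G$ has a closed soluble normal subgroup of finite index; then $G^{0}$ is connected and soluble. The structure of connected soluble linear groups — the unipotent radical of the Zariski closure of $G^{0}$ is a closed connected nilpotent subgroup, normal in the Zariski closure of $G$, with abelian quotient — yields a closed nilpotent normal subgroup $N$ of $G$ with $G/N$ virtually abelian. Applying this together with Lemma~\ref{passma} repeatedly, working down the lower central series of $N$ (each term, and its Zariski closure, being a closed normal subgroup of $G$), one obtains a finite chain of closed normal subgroups of $G$ with abelian factors below a single finite factor on top. Now induct on the length of this chain. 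If it has length at most one, $G$ is finite or abelian and there is nothing to prove; otherwise let $V$ be the bottom term, so $V$ is abelian, closed, and normal in $G$, and $G/V$ is again a virtually soluble almost Engel linear group (quotients of almost Engel groups are almost Engel, and the quotient of a linear group by a closed normal subgroup is linear) with a strictly shorter chain. By induction $G/V$ is finite-by-hypercentral, so there is a normal subgroup $H$ of $G$ containing $V$ with $H/V$ finite and $G/H$ hypercentral; note that $H$ is virtually abelian.

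The core step is to prove that $D:=\langle \mathcal E(g)\mid g\in G\rangle$ is finite. Since $G/H$ is hypercentral, hence locally nilpotent, each commutator $[x,{}_{n}\,g]$ eventually lies in $H$, and a short argument identifies $\mathcal E(g)$ with $\mathcal E(g,H\langle g\rangle)$. For a fixed $g$, Lemma~\ref{cyclic} applied to $H\langle g\rangle$ (here $H$ is virtually abelian) shows, as in its proof, that $\mathcal E(g)$ is contained in a finite normal subgroup of $H\langle g\rangle$; in particular every sink consists of finitely many elements of finite order, and $D=\langle \mathcal S\rangle$ for the $G$-invariant set $\mathcal S=\bigcup_{g}\mathcal E(g)$ of torsion elements. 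The difficulty is to make this control uniform over all of $G$. Here I would invoke the chain conditions of the linear group $G$: by the descending chain condition on centralizers there are finitely many $g_{1},\dots,g_{r}$ with $C_{G}(\langle\mathcal E(g_{1}),\dots,\mathcal E(g_{r})\rangle)=C_{G}(D)$; the subgroup $\langle\mathcal E(g_{1}),\dots,\mathcal E(g_{r})\rangle$ is finitely generated, virtually soluble, almost Engel and generated by elements of finite order, hence finite by Theorem~\ref{ccc}. One must then confine all of $\mathcal S$ to a single finite normal subgroup of $G$, using that finite conjugacy classes of a linear group centralize $G^{0}$, Dicman's Lemma~\ref{dietz}, and, where convenient, Theorem~\ref{sysak}.

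Once $D$ is known to be finite, $G/D$ is an Engel group (each $[x,{}_{n}\,g]$ eventually lies in $\mathcal E(g)\subseteq D$), so by Gruenberg's theorem it equals its Hirsch--Plotkin radical, hence is locally nilpotent and therefore hypercentral; since $D$ is finite, $G$ is finite-by-hypercentral, as required. The step I expect to be the main obstacle is the uniformity in the core step. Section~3 and Theorem~\ref{ccc} bound sinks for one, or for finitely many, almost Engel elements, but $G$ here need not be finitely generated, so passing from ``each $g$'' to ``all of $G$'' must use features peculiar to linear groups — the descending chain conditions on centralizers and on Zariski-closed subsets, and the containment of the $FC$-centre in $C_{G}(G^{0})$ — rather than purely group-theoretic arguments; arranging that the sinks collectively lie in one finite normal subgroup is the delicate point.
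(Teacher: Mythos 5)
Your overall strategy---reduce to a virtually abelian normal subgroup $H$ with $G/H$ hypercentral, then show that $D=\langle \mathcal E(g)\mid g\in G\rangle$ is finite, whence $G/D$ is Engel linear and hence hypercentral---is a correct reformulation of the lemma ($G$ is finite-by-hypercentral if and only if $D$ is finite), but the core step is exactly where the whole difficulty lives, and the tools you list do not close it. Your centralizer argument produces $g_1,\dots,g_r$ with $C_G(E)=C_G(D)$ for $E=\langle\mathcal E(g_1),\dots,\mathcal E(g_r)\rangle$, and I agree $E$ is finite (Theorem~\ref{ccc} plus the fact that a finitely generated finite-by-nilpotent group generated by torsion elements is finite). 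But equality of centralizers says nothing about the size of $D$ unless $C_G(E)$ has finite index in $G$, i.e.\ unless the sink elements have finite $G$-conjugacy classes --- and that is not established: Lemma~\ref{cyclic} confines $\mathcal E(g)$ to a finite normal subgroup of $H\langle g\rangle$ only, and in your reduction $H$ does \emph{not} have finite index in $G$ (only $H/V$ is finite), so no bound on $G$-classes follows. Even granting finite classes, Dicman's Lemma and the inclusion of the FC-centre in $C_G(G^0)$ would only make $D$ a normal torsion subgroup generated by finite normal subgroups, and such subgroups can be infinite in a virtually soluble linear group (an infinite elementary abelian unipotent normal subgroup in characteristic $p$, for instance). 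So the step you flag as ``the delicate point'' is a genuine gap, not a detail.

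It is worth seeing how the paper sidesteps this. It works bottom-up rather than top-down: inducting on the derived length of $S$, it arranges $S'$ to be hypercentral, applies Lemma~\ref{nilpo} to each $\langle S',x\rangle$, and observes that the resulting finite characteristic subgroups $R_x$ consist of elements with finite conjugacy classes, hence are centralized by $G^0$; this makes $N=S\cap G^0$ hypercentral, so $G$ is virtually hypercentral with $N$ of \emph{finite index}. That finite index is precisely what buys the uniformity you were missing: there are then only finitely many subgroups $\langle N,x\rangle$, hence only finitely many finite subgroups $Q_x$ to factor out, after which $N$ consists of right Engel elements, lies in the hypercenter by Gruenberg's theorem, and Theorem~\ref{sysak} finishes. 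If you want to salvage your top-down plan, you would need to first upgrade your $H$ to a hypercentral (or at least locally nilpotent) subgroup of finite index before attempting to control all the sinks at once.
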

\begin{proof} Suppose that $G$ is a virtually soluble almost Engel linear group. Let $S$ be a normal soluble subgroup of finite index in $G$. By induction on the derived length of $S$ we assume that $S'$ is finite-by-hypercentral. Passing to the quotient over a normal finite subgroup without loss of generality we can assume that $S'$ is hypercentral. By Lemma \ref{nilpo} the subgroup $\langle S',x\rangle$ is finite-by-hypercentral for each $x\in G$. Thus, for each $x\in G$ there exists a finite characteristic subgroup $R_x\leq\langle S',x\rangle$ such that $\langle S',x\rangle/R_x$ is hypercentral. Since $\langle S',x\rangle$ is normal in $S$, it follows that each element in $R_x$ has centralizer of finite index in $S$, hence centralizer of finite index in $G$. Therefore $G^0$ centralizes $R_x$ and it follows that $\langle S',x\rangle$ is hypercentral for each $x\in G^0$. The subgroup $\prod\langle S',x\rangle$, where $x$ ranges over $S\cap G^0$, is locally nilpotent and therefore hypercentral. In particular $N=S\cap G^0$ is hypercentral and so $G$ is virtually hypercentral. By Lemma \ref{nilpo} the subgroup $\langle N,x\rangle$ is finite-by-hypercentral for each $x\in G$. In other words, for each $x\in G$ there exists a finite characteristic subgroup $Q_x\leq\langle N,x\rangle$ such that the quotient $\langle N,x\rangle/Q_x$ is hypercentral. Since $N$ has finite index in $G$, it follows that $G$ contains only finitely many subgroups of the form $\langle N,x\rangle$. Set $N_0=\prod_{x\in G}Q_x$. We see that $N_0$ is a finite normal subgroup. Pass to the quotient $G/N_0$. Now the subgroup $\langle N,x\rangle$ is hypercentral for each $x\in G$. It follows that $N$ consists of right Engel elements and so, by the result of Gruenberg, $N$ is contained in the hypercenter of $G$. It follows from Theorem \ref{sysak} that $G$ is finite-by-hypercentral, as required. 
\end{proof}

We are now ready to prove Theorem \ref{main} in its full generality. For the reader's convenience we restate it here.

\begin{theorem} Let $G$ be a linear group. If $G$ is almost Engel, then $G$ is finite-by-hypercentral. If $G$ is almost nil, then $G$ is finite-by-nilpotent.
\end{theorem}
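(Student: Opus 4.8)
The plan is to reduce Theorem \ref{main} to the virtually soluble case, which for the first assertion is precisely Lemma \ref{nilpott} and for the second is its nilpotent refinement, and to carry out the reduction by combining the Tits alternative with the structure theory of linear groups collected in the preliminaries.

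The first observation is that an almost Engel group has no nonabelian free subgroup: if $\langle x,y\rangle$ were free of rank $2$ with $y\neq1$, the reduced length of $[x,{}_n\,y]$ would strictly increase with $n$, so the elements $[x,{}_n\,y]$ ($n\geq0$) would be pairwise distinct and no finite set could absorb a tail of this sequence. Since an Engel sink $\mathcal E(g)$ computed in $G$ remains an Engel sink in every subgroup containing $g$ and, after taking images, in every quotient, the almost Engel property passes to all sections of $G$; hence no section of $G$ is free of rank $\geq2$. By the Tits alternative (\cite{tits}) every finitely generated subgroup of $G$ is therefore virtually soluble.

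Next, let $R$ be the maximal soluble normal subgroup of $G$. It is closed, so $\overline G:=G/R$ is linear; it has trivial maximal soluble normal subgroup, is again almost Engel, and still has no nonabelian free subgroup, so every finitely generated subgroup of $\overline G$ is virtually soluble. Using this together with the triviality of the soluble radical (and the known soluble-by-locally-finite decomposition of linear groups without free subgroups), one checks that $\overline G$ is locally finite. The heart of the argument is then the claim that $\overline G$ is in fact finite. Granting it, $R$ has finite index in $G$, so $G$ is virtually soluble, and Lemma \ref{nilpott} gives that $G$ is finite-by-hypercentral — this is the first statement. For the second, an almost nil group is in particular almost Engel, so the same reduction makes $G$ virtually soluble, and one re-runs the proof of Lemma \ref{nilpott} with ``nilpotent of bounded class'' replacing ``hypercentral'' throughout: Lemma \ref{helpo}, Lemma \ref{nilpo}(2) and Theorem \ref{ccc} take over the roles of their hypercentral counterparts, a linear group generated by normal nilpotent subgroups is nilpotent, and nilpotent subgroups of $GL(m,F)$ have nilpotency class bounded in terms of $m$; this boundedness collapses the transfinite upper-central steps to finitely many stages, so that the normal subgroup of finite index produced in that proof already lies in a fixed term $Z_k(G)$ of the upper central series and Baer's theorem makes $\gamma_{k+1}(G)$ finite. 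Hence $G$ is finite-by-nilpotent.

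The step I expect to be the main obstacle is the finiteness of $\overline G$. If it failed, $\overline G$ would be an infinite locally finite linear group with trivial soluble radical, hence not virtually soluble; the examples $\mathrm{PSL}(2,\overline{\mathbb F_p})$ and the finitary alternating groups show that at this point the absence of free subgroups is not enough and the almost Engel hypothesis must be exploited quantitatively. By the structure theory of locally finite linear groups, such a $\overline G$ involves — as a section of one of its subgroups — an infinite simple linear group of Lie type over an infinite locally finite field (or an infinite finitary classical or alternating group), and for each such group a direct commutator computation produces an element of infinite Engel sink: for example in $\mathrm{PSL}(2,K)$ with $K$ infinite one takes $g$ nontrivial diagonalizable and lets $x$ run through a root subgroup, whereupon the $[x,{}_n\,g]$ sweep out that entire infinite root subgroup. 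Since the almost Engel property passes to sections, this contradicts the hypothesis on $\overline G$, and therefore $\overline G$ is finite. This is the only place where soft arguments do not suffice: one genuinely needs both the Lie-theoretic description of locally finite linear groups and control of how Engel sinks grow inside them.
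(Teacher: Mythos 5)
Your reduction to the virtually soluble case follows the same skeleton as the paper's proof: no nonabelian free subgroups, the Tits alternative, passage to the quotient by the soluble radical, reduction to a locally finite group, and the final appeal to Lemma \ref{nilpott}. The two places where you depart from the paper, however, both contain genuine problems. For the finiteness of $\overline G=G/R$ the paper argues softly: Theorem \ref{ccc} shows every finitely generated subgroup is finite-by-nilpotent, which reduces matters to an infinite periodic (hence locally finite) linear group with trivial soluble radical; the Hall--Kulatilaka theorem combined with the minimal condition on centralizers then produces a nontrivial element $a$ whose centralizer is large enough to force $\mathcal E(a)=\{1\}$, i.e.\ a nontrivial Engel element, contradicting $R=1$. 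You instead invoke ``the structure theory of locally finite linear groups'' to extract an infinite simple section of Lie type. That is the heaviest step of your argument and it is not established: the existence of such a section rests on the classification of infinite simple locally finite linear groups (Belyaev, Borovik, Hartley--Shute, Thomas), a deep, CFSG-dependent theorem that appears nowhere among the paper's tools; moreover your list of candidate sections includes infinite finitary classical and alternating groups, which are not linear of finite degree and cannot occur. Your $\mathrm{PSL}(2,K)$ computation is fine in itself (for $g$ diagonal with eigenvalue ratio not $\pm1$, the tails $[u(s),{}_n\,g]$ cycle through an infinite union of sets as $s$ varies, so the minimal sink is infinite), but the reduction to that situation is exactly the missing piece.

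The treatment of the almost nil case contains an outright false assertion: nilpotent subgroups of $GL(m,F)$ do \emph{not} have class bounded in terms of $m$ (the generalized quaternion group of order $2^n$ has class $n-1$ and embeds in $GL(2,F)$ over the complex numbers), so the proposed collapse of the transfinite upper central series into a fixed term $Z_k(G)$, and hence the application of Baer's theorem, does not go through. The paper's route is much shorter and avoids this entirely: having shown $G$ is finite-by-hypercentral, factor out the finite normal subgroup; a hypercentral group is an Engel group, so every minimal sink is trivial, and the almost nil hypothesis then says each element is $n$-Engel for some $n$, i.e.\ $G$ is nil; a nil linear group is nilpotent by the Garascuk--Gruenberg results already quoted in the introduction.
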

\begin{proof} Assume that $G$ is almost Engel. In view of Lemma \ref{nilpott} it is sufficient to show that $G$ is virtually soluble. By the Zassenhaus theorem a linear group is soluble if and only if it is locally soluble. Therefore it is sufficient to show that $G$ is virtually locally soluble. It is clear that $G$ does not contain a subgroup isomorphic to a nonabelian free group. Hence, by Tits alternative, any finitely generated subgroup of $G$ is virtually soluble. Therefore, by Theorem \ref{ccc}, any finitely generated subgroup of $G$ is finite-by-nilpotent. It becomes obvious that elements of finite order in $G$ generate a periodic subgroup. Moreover, the quotient of $G$ over the subgroup generated by all elements of finite order is locally nilpotent. Hence, $G$ is virtually locally soluble if and only if so is the subgroup generated by elements of finite order. Therefore without loss of generality we can assume that $G$ is an infinite periodic (and locally finite) group.

Let $R$ be the soluble radical of $G$. We can pass to the quotient and without loss of generality assume that $R=1$. So in particular $G$ has no nontrivial Engel elements. By the theorem of Hall-Kulatilaka $G$ contains an infinite abelian subgroup \cite{haku}. We conclude that some centralizers in $G$ are infinite. Since $G$ satisfies the minimal condition on centralizers, it follows that $G$ has a subgroup $D\neq1$ such that the centralizer $C=C_G(D)$ is infinite while $C_G(\langle D,x\rangle)$ is finite for each $x\in G\setminus D$. Using that $C$ is infinite we deduce from the Hall-Kulatilaka theorem that $C$ contains an infinite abelian subgroup $A$. Obviously $A\leq C_G(\langle D,A\rangle)$ and it follows that $A\leq D$. Thus, $A\leq Z(C)$.

Now choose $1\neq a\in A$. The centralizer $C$ normalizes the finite set $\mathcal E(a)$ because $a\in Z(C)$. Hence, $C$ contains a subgroup of finite index which centralizes $\mathcal E(a)$. It follows that $C_G(\langle D,\mathcal E(a)\rangle)$ is infinite and we conclude that $\mathcal E(a)$ is contained in $D$ and $C$ centralizes $\mathcal E(a)$. In particular, $a$ centralizes $\mathcal E(a)$ and so $\mathcal E(a)=\{1\}$. Thus, $a$ is an Engel element, a contradiction. This completes the proof of Claim 1.

Suppose now that $G$ is almost nil. We already know that $G$ is finite-by-hypercentral. Passing to a quotient over a finite normal subgroup we can assume that $G$ is hypercentral. Then obviously $G$, being both hypercentral and almost nil, must be nil. By the result of Gruenberg, $G$ is nilpotent.
\end{proof}

\end{document}